\documentclass{birkjour_t2}

\usepackage{amsfonts}
\usepackage{amsthm}
\usepackage{amsmath}
\usepackage{graphicx}
\usepackage{amssymb}
\usepackage{cases}
\usepackage{appendix}
\usepackage{fancyhdr}
\usepackage{subfig}
\usepackage{xcolor}
\usepackage{color}
\usepackage{graphicx}
\usepackage{lmodern}
\usepackage{multimedia}
\usepackage{lmodern}
\usepackage{tikz}
\usepackage{tikz,overpic}
\usepackage{multirow}
\usepackage{booktabs}
\usepackage{array}
\usepackage{empheq}
\usepackage{psfrag}
\usepackage{soul}
\usepackage{cite}
\usepackage{float}
\usepackage{epstopdf}
\epstopdfsetup{update} 

\newcommand{\cC}{{\mathcal C}}  
\newcommand{\cI}{{\mathcal I}}  
\newcommand{\cL}{{\mathcal L}}  
\newcommand{\cO}{{\mathcal O}}  

\newtheorem{theorem}{Theorem}[section]
\newtheorem{proposition}[theorem]{Proposition}
\newtheorem{lemma}[theorem]{Lemma}

\theoremstyle{definition}

\begin{document}

\title{\textbf{Geometry and numerical continuation of multiscale orbits in a nonconvex variational problem}}
\author[Annalisa Iuorio]{Annalisa Iuorio}
\address{%
Institute for Analysis and Scientific Computing, Vienna University of Technology\\
Wiedner Hauptstra{\ss}e 8-10,\\
1040, Vienna\\
Austria}
\email{annalisa.iuorio@tuwien.ac.at}
\author{Christian Kuehn}
\address{Faculty of Mathematics, Technical University of Munich\br
Boltzmannstra{\ss}e 3\br
85748 Garching bei M\"unchen,\br
Germany}
\email{ckuehn@ma.tum.de}

\author{Peter Szmolyan}
\address{%
Institute for Analysis and Scientific Computing, Vienna University of Technology\\
Wiedner Hauptstra{\ss}e 8-10,\\
1040, Vienna\\
Austria}
\email{szmolyan@tuwien.ac.at}

\subjclass{Primary 70K70; Secondary 37G15} 


\keywords{microstructure, Euler-Lagrange equation, singular perturbation, saddle-type slow manifolds, numerical continuation.}

\date{January 27, 2017}

\begin{abstract}
We investigate a singularly perturbed, non-convex variational problem arising in materials science with a
combination of geometrical and numerical methods.
Our starting point is a work by Stefan M\"uller, where it is proven that the solutions of the variational 
problem are periodic and exhibit a complicated multi-scale structure. In order to get more insight into the
rich solution structure, we transform the corresponding Euler-Lagrange equation 
into a Hamiltonian system of first order ODEs and then use geometric singular perturbation theory to study 
its periodic solutions. Based on the geometric analysis we construct an initial periodic orbit to start numerical 
continuation of periodic orbits with respect to the key parameters. This allows us to observe the influence of the 
parameters on the behavior of the orbits and to study their interplay in the minimization process. Our results 
confirm previous analytical results such as the asymptotics of the period of minimizers predicted by M\"uller. 
Furthermore, we find several new structures in the entire space of admissible periodic orbits.
\end{abstract}

\maketitle

\section{Introduction}
\label{sec:intro}

The minimization problem we consider is to find
\begin{equation}
\label{eq:min}
\min_{u \in U} \left\{ \mathcal{I}^\varepsilon(u):=\int_0^1{\left( \varepsilon^2 u_{XX}^2 + 
W(u_X) + u^2 \right)} \, \mathrm{d}X \right\},
\end{equation}
where $U$ is a space containing all sufficiently regular functions $u:\left[0,1\right] 
\rightarrow \mathbb{R}$ of the spatial variable $X \in \left[0,1\right]$,
$0 < \varepsilon \ll 1$ is a small parameter, 
$u_X=\frac{\partial u}{\partial X}$, $u_{XX}=\frac{\partial^2 u}{\partial X^2}$, and the 
function $W$ is a symmetric, double well potential; in particular, here $W$ is chosen as
\begin{equation}
 \label{eq:W}
 W(u_X)=\frac{1}{4}(u_X^2-1)^2.
\end{equation}

This model arises in the context of coherent solid-solid phase transformation to describe 
the occurrence of simple laminate microstructures in one-space dimension. Simple laminates 
are defined as particular structures where two phases of the same material (e.g., 
austenite/martensite) simultaneously appear in an alternating pattern~\cite{Mu_rev}. This 
situation is shown schematically in Figure~\ref{fig:mu}(a). These and related structures 
have been intensively studied both in the context of geometrically linear 
elasticity~\cite{Kha83B, Kha69, Roi78} and in the one of fully nonlinear 
elasticity~\cite{Abe01,Abe96B,Bal04,Bal87,Bha03B,Dol03B,Ped00B,Pit03B,Tru95,Tru96}. A comparison 
between these two approaches is given by Bhattacharya~\cite{Bha93}. We focus here on the 
one-dimensional case starting from the work of M\"uller~\cite{Mu}, but a 2D approach has also 
been proposed~\cite{Koh94, Giu12, Hea}. 
An alternative choice of the functional $W$ which sensibly simplifies energy calculations
for equilibria has been recently adopted by Yip~\cite{Yip}. The same functional with more general boundary
conditions has been treated by Vainchtein et al~\cite{Vain1}.
In all these cases, very significant theoretical and experimental advances have been reached.
Nevertheless, many interesting features concerning the asymptotics and dynamics of these problems can still be explored.

We start from the one-dimensional model~\eqref{eq:min}-\eqref{eq:W} analyzed 
by M\"uller and introduce a different approach based on geometric singular perturbation 
theory~\cite{GSPT,Kue} which allows us to better understand the critical points of the 
functional $\mathcal{I}^\varepsilon$ and to obtain an alternative method eventually able 
to handle more general functionals.

      \begin{figure}[!ht]
      \centering
      \psfrag{(a)}{(a)}
      \psfrag{(b)}{(b)}
      \psfrag{eps}{\footnotesize{$\mathcal{O}(\varepsilon)$}}
      \psfrag{eps13}{\footnotesize{$\mathcal{O}(\varepsilon^\alpha)$}}
      \psfrag{A}{\textcolor{white}{\footnotesize{A}}}
      \psfrag{M}{\footnotesize{M}}
      \psfrag{u'}{\footnotesize{$u_X$}}
      \psfrag{x}{\footnotesize{$X$}}
      \psfrag{1}{\footnotesize{$1$}}
      \psfrag{-1}{\footnotesize{$-1$}}
      \includegraphics[scale=0.6]{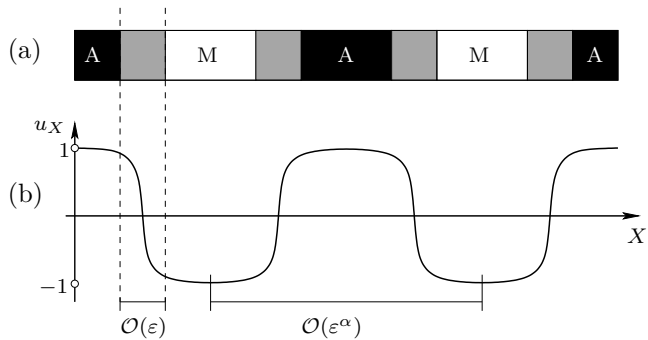}
      \caption{Schematic representation of simple laminates microstructures 
			as periodic solutions. (a) Microstructures in one space dimension: austenite 
			(A) and martensite (M) alternate, while the transition area is shown in gray. 
			(b) Structure in space of the variable $u_X$, whose values $\pm 1$ represent 
			the two different phases of the material of width of order $\cO(\varepsilon^\alpha)$, 
			with $\alpha = 1/3$ for minimizers (as shown in~\cite{Mu}) and $\alpha=0$ for other critical points.
			The width of the transition interval is of order $\cO(\varepsilon)$. }\label{fig:mu}
      \end{figure} 

In~\cite{Mu} minimizers are proven to exhibit a periodic multi-scale structure (Figure~\ref{fig:mu}): 
a fast scale of order $\cO(\varepsilon)$ describes the ``jumps'' between the two values of the 
derivative $u_X$, and a slow scale of order $\cO(\varepsilon^{1/3})$ represents the distance between 
two points with equal value of $u_X$. From a physical viewpoint, the two values of the derivative 
$u_X=\pm 1$ model the two different phases of the material. The jumps describe the transition 
between the phases and the regions with almost constant values of $u_X$ correspond to parts of 
the material occupied by the same phase.
One of the key results in~\cite{Mu} consists in an
asymptotic formula for the period of minimizing solutions, when the solution space $U$ is chosen
as the set of all ${u \in H^2(0,1)}$ subject to Dirichlet boundary conditions. 
For $\varepsilon \rightarrow 0$, the period $P^{\varepsilon}$ behaves as
\begin{equation}
 \label{eq:P_mue}
  P^{\varepsilon}=2(6 A_0 \varepsilon)^{1/3} + \mathcal{O}(\varepsilon^{2/3}),
\end{equation}
where $A_0=2 \int_{-1}^1 \! W^{1/2}(w) \, \mathrm{d}w$. 

The approach based on fast-slow analysis of the Euler-Lagrange equation applied here
allows us to identify geometrically certain classes
of periodic orbits. These orbits are used as starting solutions for numerical continuation using the software 
package~\texttt{AUTO}~\cite{AU}. This powerful tool has been adopted for example by
Grinfeld and Lord~\cite{GrLo} in their numerical analysis of small amplitude periodic solutions of \eqref{eq:min}.
We provide here a detailed study of periodic solutions based upon one-parameter continuation in the parameters 
$\varepsilon$ and $\mu$. It turns out that several fold bifurcations of periodic orbits  
structure the parameter space. A numerical comparison with the law \eqref{eq:P_mue} will be presented, 
by means of a minimization process of the functional $\mathcal{I}^{\varepsilon}$ along certain families 
of periodic orbits. Our work also leads to new insights into the dependence of the period 
on the parameters $\varepsilon$ and $\mu$ for non-minimizing sequences of periodic orbits.\medskip

The paper is structured as follows: Section~\ref{sec:EL} introduces the approach based on geometric singular 
perturbation theory using the intrinsic multi-scale structure of the problem. We describe the 
transformation of the Euler-Lagrange equation associated to the functional $\mathcal{I}^\varepsilon$ 
into a multiscale ODE system, along with the decomposition of periodic orbits into slow and fast pieces 
using the Hamiltonian function. 
We identify a family of large amplitude singular periodic orbits and prove their persistence for $\epsilon$ small.
A crucial point is the construction of an initial periodic 
orbit for $\varepsilon \neq 0$ in order to start numerical continuation: the strategy we use is 
illustrated in Section~\ref{sec:numerics}, where the continuation of the orbits with respect to the main 
parameters is also performed. This section includes also the comparison between the analytical expression 
of the period given by M\"uller and our numerical results as well as the general parameter study of 
periodic solutions. Section~\ref{sec:conclusion} is devoted to conclusions and an outline for future 
work.

\section{The Euler-Lagrange equation as a fast-slow system}
\label{sec:EL}

In this section, the critical points (not only the minimizers) of the functional 
$\mathcal{I}^{\varepsilon}$ are analyzed. A necessary condition they 
have to satisfy is the Euler-Lagrange equation~\cite{Dac15}. The Euler-Lagrange
equation associated to $\mathcal{I}^{\varepsilon}$ is the singularly perturbed, 
fourth order equation
\begin{equation}
\label{eq:EL}
\varepsilon^2u_{XXXX}-\frac{1}{2}\sigma(u_X)_X+u=0,
\end{equation}
where $\sigma(u_X)=W'(u_X)=u_X^3-u_X$. Equation~\eqref{eq:EL} can be rewritten 
via 
\[
 \begin{aligned}
  w &:= u_X, \\
  v &:= -\varepsilon^2 w_{XX}+\frac12 \sigma(w), \\
  z &:= \varepsilon w_X,
 \end{aligned}
\]
as an equivalent system of first order ODEs
\begin{equation}
\label{eq:fusyS}
\begin{aligned}
\dot{u} &=w,\\
\dot{v} &=u,\\
\varepsilon \dot{w} &= z,\\
\varepsilon \dot{z} &= \frac{1}{2} (w^3-w)-v,
\end{aligned}
\end{equation}
where $\frac{\mathrm{d}}{\mathrm{d}X}=\dot{}$\ .
Equations~\eqref{eq:fusyS} exhibit the structure of a (2,2)-fast-slow system, 
with $u, v$ as slow variables and $w, z$ as fast variables. We recall that a 
system is called \emph{(m,n)-fast-slow}~\cite{Feni,GSPT,Kue} when it has the form
\begin{equation}
\label{eq:sfs}
\begin{aligned}
\varepsilon \dot{x} &= f(x,y,\varepsilon),\\
\dot{y} &= g(x,y,\varepsilon),
\end{aligned}
\end{equation}
where $x \in \mathbb{R}^m$ are the \emph{fast} variables and $y \in \mathbb{R}^n$ 
are the \emph{slow} variables. The re-formulation of system~\eqref{eq:sfs}
on the fast scale is obtained by using the change of variable $\xi=\frac{X}{\varepsilon}$, i.e.
\begin{equation}
\label{eq:sff}
\begin{aligned}
\frac{\mathrm{d} x}{\mathrm{d} \xi} = x' &= f(x,y,\varepsilon),\\
\frac{\mathrm{d} y}{\mathrm{d} \xi} = y' &= \varepsilon g(x,y,\varepsilon).
\end{aligned}
\end{equation}
On the fast scale, system~\eqref{eq:fusyS} has the form
\begin{equation}
\label{eq:fusyF}
\begin{aligned}
u' &=\varepsilon w,\\
v' &=\varepsilon u,\\
w' &= z,\\
z' &= \frac{1}{2} (w^3-w)-v,
\end{aligned}
\end{equation}
which for $\varepsilon>0$ is equivalent to \eqref{eq:fusyS}.

The system possesses the unique equilibrium 
\begin{equation}
 p_0= \left( 0,0,0,0 \right),
\end{equation}
which is a center, since the eigenvalues of the Jacobian are all purely imaginary. 
An important property of system~\eqref{eq:fusyS} is stated in the following result:

\begin{lemma}
\label{lem:Ham}
Equations \eqref{eq:fusyS} and \eqref{eq:fusyF} are singularly perturbed Hamiltonian 
systems
{\arraycolsep=0.7pt\def\arraystretch{1.4}
\begin{equation}
\label{spHs}
\begin{array}{rcr}
    \dot{u} &=&- \frac{\partial H}{\partial v}\\
    \dot{v} &=& \frac{\partial H}{\partial u}\\
    \varepsilon \dot{w} &=& -\frac{\partial H}{\partial z}\\
    \varepsilon \dot{z} &=& \frac{\partial H}{\partial w},
\end{array}
\qquad 
\overset{\textstyle{\xi=\frac{X}{\varepsilon}}}{\Leftrightarrow}
\qquad 
\begin{array}{rcr}
  u' &=&-\varepsilon \frac{\partial H}{\partial v}\\
  v' &=&\varepsilon \frac{\partial H}{\partial u}\\
  w' &=& -\frac{\partial H}{\partial z}\\
  z' &=& \frac{\partial H}{\partial w},
\end{array}
\end{equation}
}
i.e., they are Hamiltonian systems with respect to the symplectic form 
$\mathrm{d}z \land \mathrm{d} w + \frac{1}{\varepsilon} \mathrm{d}v \land 
\mathrm{d}u$ and with Hamiltonian function 
\begin{equation}
\label{Ham}
H(u,v,w,z)=\frac{1}{8}(4 u^2 - 8 v w - 2 w^2 + w^4 - 4 z^2).
\end{equation}
\end{lemma}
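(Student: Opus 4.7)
The claim is essentially algebraic, so the plan is direct verification rather than any conceptual argument. First, I would compute $\nabla H$ from the explicit polynomial formula in \eqref{Ham}, yielding $H_u = u$, $H_v = -w$, $H_w = \tfrac12(w^3-w) - v$, and $H_z = -z$. Substituting these four expressions into the left-hand tableau in \eqref{spHs} immediately reproduces \eqref{eq:fusyS} line by line, and since for $\varepsilon > 0$ the fast-time system \eqref{eq:fusyF} is only a time rescaling of \eqref{eq:fusyS} by $\xi = X/\varepsilon$, the right-hand tableau follows at once from the same computation.

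The second half of the statement is the geometric interpretation via the symplectic form $\omega := \mathrm{d}z \wedge \mathrm{d}w + \varepsilon^{-1}\,\mathrm{d}v \wedge \mathrm{d}u$. I would check that $\omega$ has constant coefficients (so $\mathrm{d}\omega = 0$) and that its coefficient matrix in the basis $\partial_u, \partial_v, \partial_w, \partial_z$ is block diagonal with antisymmetric $2 \times 2$ blocks and determinant $\varepsilon^{-2} \neq 0$, hence $\omega$ is genuinely symplectic for every $\varepsilon > 0$. The Hamiltonian vector field $X_H$ defined by $\iota_{X_H} \omega = \mathrm{d}H$ is then read off by expanding the contraction as
\[
\iota_{X_H}\omega \;=\; z'\,\mathrm{d}w - w'\,\mathrm{d}z + \varepsilon^{-1}\bigl(v'\,\mathrm{d}u - u'\,\mathrm{d}v\bigr),
\]
and matching this against $\mathrm{d}H = u\,\mathrm{d}u - w\,\mathrm{d}v + \bigl(\tfrac12(w^3 - w) - v\bigr)\mathrm{d}w - z\,\mathrm{d}z$ (using the partials from step one) produces exactly the fast-scale system \eqref{eq:fusyF}.

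There is no substantive obstacle; the only point worth flagging is that the $\varepsilon$-dependence of $\omega$ is essential, since the ordinary symplectic form $\mathrm{d}z \wedge \mathrm{d}w + \mathrm{d}v \wedge \mathrm{d}u$ would fail to reproduce the $\varepsilon$-prefactor on the slow block. In this sense, the lemma records the observation that the fast-slow decomposition \eqref{eq:sfs}--\eqref{eq:sff} is compatible with a Hamiltonian structure precisely when one of the two symplectic blocks is rescaled by $1/\varepsilon$; this will later be useful because conservation of $H$ along trajectories of \eqref{eq:fusyS}--\eqref{eq:fusyF} follows immediately.
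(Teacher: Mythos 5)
Your proof is correct and follows essentially the same route as the paper's, which simply states ``the result follows by differentiating~\eqref{Ham} with respect to the four variables.'' You additionally spell out the verification that $\omega = \mathrm{d}z \wedge \mathrm{d}w + \varepsilon^{-1}\,\mathrm{d}v \wedge \mathrm{d}u$ is symplectic and that $\iota_{X_H}\omega = \mathrm{d}H$ reproduces \eqref{eq:fusyF}, a welcome elaboration of what the paper leaves implicit via its literature reference, but not a different argument.
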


\begin{proof}
The result follows by differentiating~\eqref{Ham} with respect to 
the four variables. For more background on fast-slow Hamiltonian systems 
of this form see~\cite{Gel02}.
\end{proof}

Since the Hamiltonian~\eqref{Ham} is a first integral of the system, the dynamics 
take place on level sets, defined by fixing $H(u,v,w,z)$ to a constant value 
$\mu\in\mathbb{R}$. This allows us to reduce the dimension of the system by one, which 
we use both in analytical and numerical considerations.\medskip  

One main advantage in the use of geometric singular perturbation theory is that the 
original problem can be split into two subsystems by analyzing the singular limit 
$\varepsilon \rightarrow 0$ on the slow scale~\eqref{eq:sfs} and on the fast 
scale~\eqref{eq:sff}. The subsystems are usually easier to handle. Under suitable
conditions the combination of both subsystems allows us to obtain information for 
the full system when $0 < \varepsilon \ll 1$. In particular, if one can construct 
a singular periodic orbit by combining pieces of slow and fast orbits, then the existence of 
a periodic orbit $\mathcal{O}(\varepsilon)$-close to the singular one for small 
$\varepsilon \neq 0$ can frequently be proven under suitable technical 
conditions by tools from geometric singular perturbation 
theory~\cite{Feni,GSPT,Kue,ST}.\medskip

The slow singular parts of an orbit are derived from the \emph{reduced problem} (or 
\emph{slow subsystem}), obtained by letting $\varepsilon \rightarrow 0$ in~\eqref{eq:sfs}
\begin{equation}
\label{eq:rp0}
\begin{aligned}
0 &= f(x,y,0),\\
\dot{y} &= g(x,y,0),
\end{aligned}
\end{equation}
which describes the slow dynamics on the \emph{critical manifold} 
\begin{equation}
\cC_0:=\{(x,y)\in\mathbb{R}^m\times \mathbb{R}^n:f(x,y,0)=0.\} 
\end{equation}
Considering $\varepsilon \rightarrow 0$ on 
the fast scale \eqref{eq:sff} yields the \emph{layer problem} (or 
\emph{fast subsystem})
\begin{equation}
\label{eq:lp0}
\begin{aligned}
x' &= f(x,y,0),\\
y' &= 0,
\end{aligned}
\end{equation}
where the fast dynamics is studied on ``layers'' with constant values of the 
slow variables. Note that $\cC_0$ can also be viewed as consisting of equilibrium
points for the layer problem. $\cC_0$ is called \emph{normally hyperbolic} if 
the eigenvalues of the matrix $\textnormal{D}f_x(p,0)\in\mathbb{R}^{m\times m}$ 
do not have zero real parts for $p\in\cC_0$. For normally hyperbolic invariant
manifolds, Fenichel's Theorem applies and yields the existence of a \emph{slow
manifold} $\cC_\varepsilon$. The slow manifold lies at a distance $\cO(\varepsilon)$
from $\cC_0$ and the dynamics on $\cC_\varepsilon$ is well-approximated by the reduced
problem; for the detailed technical statements of Fenichel's Theorem we refer 
to~\cite{Feni,GSPT,Kue}.\medskip

In our Hamiltonian fast-slow context, we focus on the analysis of families of 
periodic orbits for system~\eqref{eq:fusyS} which are parametrized by the 
level set parameter~$\mu$. The first goal is to geometrically construct  
periodic orbits in the singular limit $\varepsilon=0$. The reduced 
problem is given by
\begin{equation}
\label{eq:RPO}
\begin{aligned}
\dot{u} &= w,\\
\dot{v} &= u,
\end{aligned}
\end{equation}
on the critical manifold (see Fig.~\ref{fig:CrMan})
\begin{equation}
\label{CrMan}
\mathcal{C}_0=\left\{ (u, v, w, z) \in \mathbb{R}^4\ :\ z=0,\ v=\frac{1}{2}(w^3-w) \right\}.
\end{equation}
The equations of the layer problem are 
\begin{equation}
\label{eq:LP}
\begin{aligned}
w' &= z\\
z' &= \frac{1}{2}(w^3-w)-\bar{v},
\end{aligned}
\end{equation}
on ``layers'' where the slow variables are constant ($u=\bar{u},\ v=\bar{v}$).
Note that for Hamiltonian fast-slow systems such as \eqref{spHs}, both reduced and layer
problems are Hamiltonian systems with one degree of freedom.

\subsection{The Reduced Problem}
\label{ssec:reduced}

Equations \eqref{eq:RPO} describe the reduced problem on $\cC_0$, if $w$ is considered
as a function of $(u, v)$ on $\cC_0$.

\begin{lemma}
\label{lem:C0}
$\mathcal{C}_0$ is normally hyperbolic except for two fold lines 
\begin{equation}
\begin{aligned}
\cL_-& =\left\{\left(u,\left(w_-^3-w_-\right)/2 ,w_-,0\right)\in\mathbb{R}^4\right\},\\
\cL_+& =\left\{\left(u,\left(w_+^3-w_+\right)/2 ,w_+,0\right)\in\mathbb{R}^4\right\},
\end{aligned}
\end{equation}
where $w_\pm$ are defined by $\sigma'(w_\pm)=0$, i.e., $w_\pm=\pm1/\sqrt{3}$. 
For $p\in \cL_\pm$, the matrix $\mathrm{D}_x f(p,0)$ has a double zero 
eigenvalue.
\end{lemma}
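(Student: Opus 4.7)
The plan is to reduce the statement to a short linear algebra calculation on the layer problem~\eqref{eq:LP}. Writing the fast variables as $x=(w,z)^\top$ and the right-hand side of~\eqref{eq:LP} as $f(x,v,0) = \bigl(z,\,\tfrac{1}{2}(w^3-w)-v\bigr)^\top$, I first compute
\[
  \mathrm{D}_x f(p,0) = \begin{pmatrix} 0 & 1 \\ \tfrac{1}{2}\sigma'(w) & 0 \end{pmatrix} = \begin{pmatrix} 0 & 1 \\ \tfrac{1}{2}(3w^2-1) & 0 \end{pmatrix}
\]
at a point $p=(u,v,w,0)\in\mathcal{C}_0$; the characteristic polynomial is $\lambda^2 - \tfrac{1}{2}\sigma'(w)$, giving eigenvalues $\lambda_\pm = \pm\sqrt{\sigma'(w)/2}$.

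Next I would read off that $\lambda_\pm$ vanish simultaneously precisely when $\sigma'(w)=0$, i.e.\ at $w = w_\pm := \pm 1/\sqrt{3}$. Restricted to $\mathcal{C}_0$ (on which $z=0$ and $v = (w^3-w)/2$), this locus is exactly the pair of lines $\mathcal{L}_\pm$ in the statement, parametrized by the free slow variable $u$. For $p\notin\mathcal{L}_\pm$ the matrix $\mathrm{D}_x f(p,0)$ has two nonzero eigenvalues (a real saddle pair when $|w|>1/\sqrt{3}$, a purely imaginary pair when $|w|<1/\sqrt{3}$), which is the appropriate Hamiltonian notion of normal hyperbolicity, ensuring $\det \mathrm{D}_x f \neq 0$ and thus applicability of Fenichel-type persistence. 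On $\mathcal{L}_\pm$, by contrast, $\mathrm{D}_x f$ collapses to $\bigl(\begin{smallmatrix}0&1\\0&0\end{smallmatrix}\bigr)$, which has a double zero eigenvalue with a nontrivial Jordan block, as claimed.

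There is essentially no hard step: the whole lemma amounts to a $2\times 2$ eigenvalue computation combined with identifying the zero set of $\sigma'$. The only minor subtlety is terminological, since in the Hamiltonian fast-slow framework the normal spectrum is forced to be $\pm$-symmetric, so ``normally hyperbolic'' must here be read as invertibility of $\mathrm{D}_x f$ in the fast directions, rather than the classical nonzero-real-part condition; once this convention is fixed, the fold lines arise as the unique obstruction to that invertibility.
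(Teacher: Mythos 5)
The paper does not supply a proof of Lemma~\ref{lem:C0}; it is stated bare, so there is nothing to compare your argument against line by line. Your computation is the obvious (and surely intended) one and it is correct: $\mathrm{D}_x f = \bigl(\begin{smallmatrix}0&1\\ \sigma'(w)/2&0\end{smallmatrix}\bigr)$, eigenvalues $\pm\sqrt{\sigma'(w)/2}$, zeros of $\sigma'$ at $w_\pm=\pm 1/\sqrt{3}$, fold lines $\cL_\pm$ cut out on $\cC_0$ by $w=w_\pm$ with the free slow variable $u$ parametrizing each line, and a nilpotent $2\times 2$ Jordan block (double zero eigenvalue) there. That is the entire lemma.

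One correction to your closing paragraph, though. On the middle branch $\cC_{0,m}$, where $|w|<1/\sqrt3$, the eigenvalues are a nonzero purely imaginary pair. Under the definition the paper itself gives just before this lemma (``eigenvalues of $\mathrm{D}_x f$ do not have zero real parts''), this branch is \emph{not} normally hyperbolic; it is what is usually called normally \emph{elliptic}, and Fenichel's theorem does not apply there. Invertibility of $\mathrm{D}_x f$, i.e.\ $\det\mathrm{D}_x f\neq 0$, is necessary but not sufficient for normal hyperbolicity, and there is no standard ``Hamiltonian notion of normal hyperbolicity'' that would make $\cC_{0,m}$ qualify. So rather than rescuing the lemma's wording by redefining the term, it is more honest to say the lemma is phrased loosely: the set on which the classical definition fails is $\cC_{0,m}\cup\cL_\pm$, and the fold lines $\cL_\pm$ are distinguished within that set by being the locus of the genuinely degenerate (double-zero, nontrivial Jordan block) linearization. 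The paper implicitly acknowledges this a few lines later by restricting attention to the saddle branches $\cC_{0,l}$ and $\cC_{0,r}$, which are the only normally hyperbolic pieces in the ordinary sense and the only ones used in the construction of the singular orbits.
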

      
The lines $\cL_\pm$ naturally divide $\mathcal{C}_0$ into three parts
\[
\mathcal{C}_{0,l}=\mathcal{C}_0 \cap \left\{ w < w_- \right\},\ 
\mathcal{C}_{0,m}=\mathcal{C}_0 \cap \left\{w_- \leq w \leq w_+\right\},\ 
\mathcal{C}_{0,r}=\mathcal{C}_0 \cap \left\{ w > w_+ \right\},
\]
as shown in Figure~\ref{fig:CrMan}. The submanifolds involved in our analysis are 
only $\mathcal{C}_{0,l}$ and $\mathcal{C}_{0,r}$, which are normally hyperbolic. More
precisely, $\mathcal{C}_{0,l}$ and $\mathcal{C}_{0,r}$ are of \emph{saddle-type}, since 
the matrix $\mathrm{D}_x f(p,0)$ along them always has two real eigenvalues of opposite
sign. We remark that saddle-type critical manifolds have played an important role in the
history of fast-slow systems in the context of the travelling wave problem for the 
FitzHugh-Nagumo equation, see for 
example~\cite{JonesKopellLanger,KrupaSandstedeSzmolyan,GuckenheimerKuehn1}.

      \begin{figure}[!ht]
      \psfrag{C0}{$\mathcal{C}_0$}
      \psfrag{Clm}{$\mathcal{C}_l^\mu$}
      \psfrag{Crm}{$\mathcal{C}_r^\mu$}
      \psfrag{w}{$w$}
      \psfrag{v}{$v$}
      \psfrag{u}{$u$}
      \centering
      \includegraphics[scale=0.4]{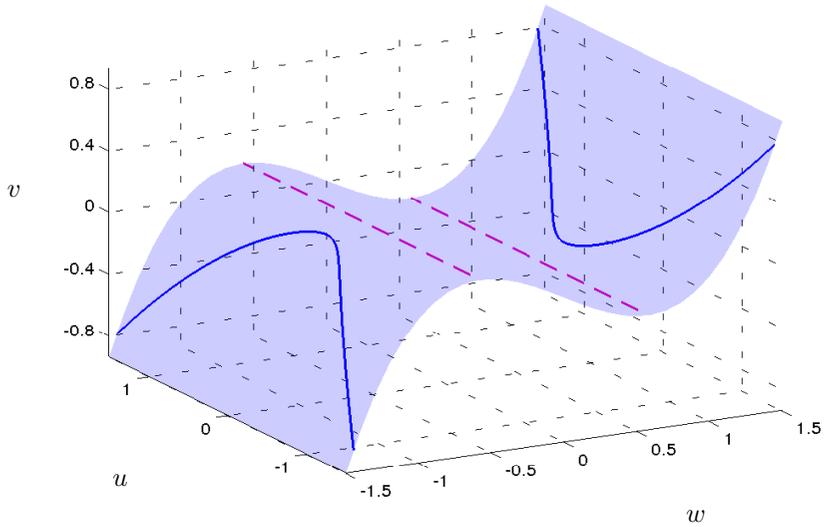}
      \caption{Critical manifold $\cC_0$ in $(w,u,v)$-space. 
			The magenta dashed lines are the fold lines $\cL_\pm$. The blue solid curves correspond 
			to $\mathcal{C}_{0,l}^\mu$ and $\mathcal{C}_{0,r}^\mu$, i.e., the intersection 
			of $\mathcal{C}_{0,l}$ and $\mathcal{C}_{0,r}$ and the hypersurface $H(u,v,w,z)=\mu$ 
			for $\mu=0$. }\label{fig:CrMan}
      \end{figure} 
      
\begin{lemma}
On $\cC_0-\cL_\pm$, the flow of the reduced system is, up to a time rescaling, given by
\begin{equation}
\label{slsy}
\begin{aligned}
\dot{u} &= (3w^2-1)w,\\
\dot{w} &= 2u.
\end{aligned}
\end{equation}
\end{lemma}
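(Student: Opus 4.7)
The plan is to use $(u,w)$ as local coordinates on the critical manifold $\cC_0$, exploit the graph representation $v=\tfrac12(w^3-w)$, $z=0$, and then absorb the resulting denominator into a time rescaling that is regular exactly on $\cC_0-\cL_\pm$.

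First, I would note that away from the fold lines $\cL_\pm$ (where $3w^2-1=0$), the map $(u,w)\mapsto(u,\tfrac12(w^3-w),w,0)$ is a diffeomorphism onto $\cC_0-\cL_\pm$, so the reduced flow is faithfully described in the $(u,w)$-chart. Differentiating the constraint $v=\tfrac12(w^3-w)$ along the reduced flow gives
\begin{equation*}
\dot v=\tfrac12(3w^2-1)\dot w,
\end{equation*}
and combining with the reduced equation $\dot v=u$ from \eqref{eq:RPO} yields
\begin{equation*}
(3w^2-1)\dot w=2u.
\end{equation*}
Together with $\dot u=w$, this is the reduced problem expressed in $(u,w)$-coordinates:
\begin{equation*}
\dot u=w,\qquad \dot w=\frac{2u}{3w^2-1}.
\end{equation*}

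Second, I would clear the denominator by rescaling the independent variable. Setting $\mathrm d\tau=\mathrm dX/(3w^2-1)$ on each of the open subsets $\cC_{0,l}$ and $\cC_{0,r}$ (where $3w^2-1$ has a definite sign, so the rescaling is a smooth, orientation-preserving or orientation-reversing reparametrization), and writing $'=\mathrm d/\mathrm d\tau$, one obtains
\begin{equation*}
u'=(3w^2-1)w,\qquad w'=2u,
\end{equation*}
which is exactly \eqref{slsy}.

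There is really no obstacle to overcome; the only point worth stressing is that the rescaling becomes singular precisely on $\cL_\pm$, which is consistent with Lemma~\ref{lem:C0} asserting the loss of normal hyperbolicity there. On $\cC_{0,m}$ the factor $3w^2-1$ is negative, so the rescaling reverses the direction of the flow, but this is immaterial for orbit geometry. Thus the statement holds on all of $\cC_0-\cL_\pm$ as claimed.
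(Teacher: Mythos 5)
Your proposal is correct and follows essentially the same route as the paper: differentiate the constraint $v=\tfrac12(w^3-w)$, substitute $\dot v=u$, rewrite in $(u,w)$-coordinates, and absorb the factor $(3w^2-1)$ by a time reparametrization, noting the orientation reversal on $\cC_{0,m}$. You simply spell out the intermediate steps and the regularity of the rescaling more explicitly than the paper's terse proof does.
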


\begin{proof}
We differentiate $v=\frac12(w^3-w)$ with respect to $X$, re-write the equation in 
$(u,w)$-variables and apply the time rescaling corresponding to the multiplication
of the vector field by the factor $(3w^2-1)$ (cf.~\cite[Sec.7.7]{Kue}). On 
$\mathcal{C}_{0,m}$ this procedure changes the direction of the flow, but it does not affect
the parts of the critical manifold involved in our analysis.
\end{proof}

The Hamiltonian function allows us to restrict our attention to two subsets of 
$\mathcal{C}_{0,l}^\mu$ and $\mathcal{C}_{0,r}^\mu$ by fixing the value of $\mu$. 
Analyzing the slow flow on these two normally hyperbolic branches, we see that
$u$ decreases along $\cC_{0,l}$ and increases along $\cC_{0,r}$ as shown in Figure~\ref{fig:CrManMu}. 

      \begin{figure}[!ht]
      \psfrag{Clm}{$\mathcal{C}_l^\mu$}
      \psfrag{Crm}{$\mathcal{C}_r^\mu$}
      \psfrag{w}{\footnotesize{$w$}}
      \psfrag{u}{\footnotesize{$u$}}
      \centering
      \includegraphics[scale=0.5]{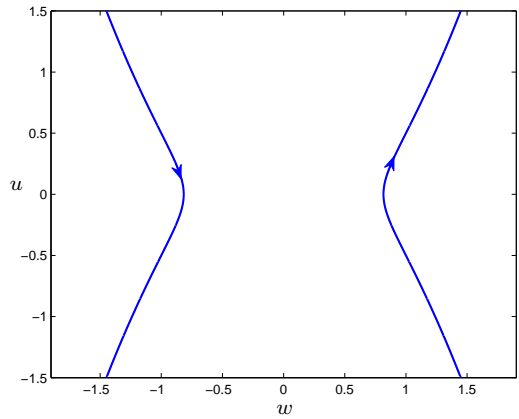}
      \caption{$\mathcal{C}_{0,l}^\mu$ and $\mathcal{C}_{0,r}^\mu$ in $(w,u)$-space
      with $\mu=0$; cf.~Figure~\ref{fig:CrMan}.}\label{fig:CrManMu}
      \end{figure}

\subsection{The Layer Problem}
\label{ssec:layer}

The layer problem is obtained by setting $\varepsilon=0$ in~\eqref{eq:fusyF}. We 
obtain a two-dimensional Hamiltonian vector field on ``layers'' where the slow variables 
are constant ($u=\bar{u},v=\bar{v}$)
\begin{equation}
\label{eq:fasy}
\begin{aligned}
w' &= z,\\
z' &= \frac{1}{2}(w^3-w)-\bar{v}.
\end{aligned}
\end{equation}
The two branches $\mathcal{C}_{0,l}^\mu$ and $\mathcal{C}_{0,r}^\mu$ are 
hyperbolic saddle equilibria for the system~\eqref{eq:fasy} for every value 
of $\bar{u}, \bar{v}$. To construct a singular limit periodic orbit we are 
particularly interested in connecting orbits between equilibria of the 
layer problem.

      \begin{figure}[!ht]
      \psfrag{w}{\footnotesize{$w$}}
      \psfrag{z}{\footnotesize{$z$}}
      \centering
      \includegraphics[scale=0.5]{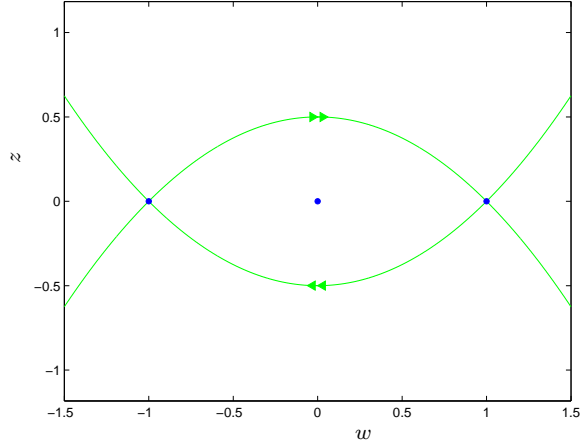}
      \caption{Fast flow in the $(w,z)$-space for~\eqref{eq:fasy}. Equilibria are 
      marked with blue dots and the stable and unstable manifold trajectories in 
      green. The heteroclinic fast connections are indicated with double arrows.}
      \label{fig:FaFl}
      \end{figure}
      
\begin{lemma}
\label{lem:het}
The layer problem~\eqref{eq:fasy} has a double heteroclinic connection if and only 
if $\bar{v}=0$. These are the only possible heteroclinic connections of the layer 
problem~\eqref{eq:fasy}.
\end{lemma}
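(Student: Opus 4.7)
The plan is to exploit that the layer problem \eqref{eq:fasy} is a one-degree-of-freedom Hamiltonian system, so its phase portrait is completely determined by the level sets of a scalar energy. First I would restrict the Hamiltonian $H$ from Lemma~\ref{lem:Ham} to the layer (holding $\bar{u},\bar{v}$ fixed) and note that this yields the conserved quantity
\begin{equation*}
K(w,z)=\tfrac{1}{2}z^{2}+U(w), \qquad U(w)=-\tfrac{1}{8}w^{4}+\tfrac{1}{4}w^{2}+\bar{v}\,w,
\end{equation*}
for \eqref{eq:fasy}. The equilibria of the layer problem are the critical points of $U$, given by $w^{3}-w=2\bar{v}$. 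For $|\bar{v}|\ge 1/(3\sqrt{3})$ this cubic has a single real root and no heteroclinic can exist; for $|\bar{v}|<1/(3\sqrt{3})$ there are three roots $w_{l}<-1/\sqrt{3}<w_{m}<1/\sqrt{3}<w_{r}$, and from $U''(w)=\tfrac{1}{2}-\tfrac{3}{2}w^{2}$ one sees that $w_{l}$ and $w_{r}$ are local maxima of $U$ (saddles of the flow, lying on $\cC_{0,l}$ and $\cC_{0,r}$) while $w_{m}$ is a local minimum (center, on $\cC_{0,m}$).

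Next I would argue that any heteroclinic orbit must run between two hyperbolic equilibria, so only a connection between $w_{l}$ and $w_{r}$ is possible. Because $K$ is a first integral and $z=0$ at the saddles, such a connection exists precisely when the two saddles lie on the same level set, i.e.\ when $U(w_{l})=U(w_{r})$. To pin down the value of $\bar{v}$ for which this holds, I would introduce
\begin{equation*}
\Phi(\bar{v}) := U\bigl(w_{l}(\bar{v})\bigr)-U\bigl(w_{r}(\bar{v})\bigr),
\end{equation*}
differentiate using the implicit function theorem, and use $U'(w_{l,r})=0$ together with $\partial U/\partial\bar{v}=w$ to get $\Phi'(\bar{v})=w_{l}-w_{r}<0$. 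Since the symmetry $(w,\bar{v})\mapsto(-w,-\bar{v})$ interchanges $w_{l}$ and $-w_{r}$ and yields $\Phi(0)=0$, strict monotonicity forces $\Phi(\bar{v})=0$ iff $\bar{v}=0$.

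Finally I would verify the ``double'' part by writing out the exceptional level set explicitly at $\bar{v}=0$: the equilibria are $w=\pm 1$, $z=0$ with $K(\pm 1,0)=1/8$, and the equation $K(w,z)=1/8$ factors as $4z^{2}=(w^{2}-1)^{2}$, giving the two branches $z=\pm\tfrac{1}{2}(1-w^{2})$. Each branch is a regular trajectory on $|w|<1$ connecting $(-1,0)$ to $(1,0)$ in opposite directions, forming the heteroclinic cycle visible in Figure~\ref{fig:FaFl}. The only place where real work is needed is the monotonicity of $\Phi$; everything else is bookkeeping. I do not anticipate a serious obstacle, although one must remember to exclude the center $w_{m}$ at the outset so that it is clear no heteroclinics to or from that equilibrium can exist.
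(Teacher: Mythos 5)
Your proof is correct and rests on the same Hamiltonian-level-set framework as the paper's, but it handles the decisive step---showing $\bar{v}=0$ is the unique value for which the two saddles $w_l,w_r$ lie on the same energy level---by a different and cleaner argument. The paper simply states that an explicit calculation of the cubic roots $w_{l,r}(\bar{v})$ and the corresponding energy values $\theta_{l,r}=H_f(w_{l,r},0)$ yields $\bar{v}=0$. You instead introduce the energy gap $\Phi(\bar{v})=U\left(w_l(\bar{v})\right)-U\left(w_r(\bar{v})\right)$, use $U'(w_{l,r})=0$ together with $\partial_{\bar{v}}U=w$ (an envelope-type identity via the implicit function theorem, valid since $U''(w_{l,r})\neq 0$) to get $\Phi'(\bar{v})=w_l-w_r<0$, and then use the odd symmetry of $\Phi$ induced by $(w,\bar{v})\mapsto(-w,-\bar{v})$ to pin the unique zero at $\bar{v}=0$. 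This sidesteps any Cardano-type closed-form computation and would extend to a general symmetric double-well $W$, whereas the paper's ``explicit calculation'' leans on the polynomial being low degree. You also correctly exclude heteroclinics to or from the center $w_m$, and you verify the double-loop structure by factoring the level set $K=1/8$ at $\bar{v}=0$ into the two arcs $z=\pm\tfrac12(1-w^2)$, matching the paper. One very minor simplification: $\Phi(0)=0$ already follows from the evenness of $U(\cdot,0)$ together with $w_l(0)=-w_r(0)=-1$, so the full $(w,\bar{v})$-symmetry is not strictly needed at that point.
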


\begin{proof}
System~\eqref{eq:fasy} is Hamiltonian, with $\bar{v}$ as a parameter and Hamiltonian
function
 \[
  H_f(w,z)=-\frac{z^2}{2} + \frac{w^4}{8} - \frac{w^2}{4} - \bar{v} w.
 \]
The lemma follows easily by discussing the level curves of the Hamiltonian;
for the convenience of the reader we outline the argument.

Indexing the level set value of $H_f$ as $\theta$, the solutions 
of~\eqref{eq:fasy} are level curves $\{H_f(w,z)=\theta\}$. The equilibria 
of~\eqref{eq:fasy} are $\{z=0,w=w_l,w_m,w_r\}$; here $w_l,w_m,w_r$ are the 
three solutions of 
\begin{equation}
\label{eq:3sol}
2\bar{v}-w^3+w=0
\end{equation}
which depend upon $\bar{v}$. We only have to
consider the case where there are at least two real equilibria $w_l$ and $w_r$ 
which occurs for $\bar{v}\in[-1/(3\sqrt{3}),1/(3\sqrt{3})]$. Let 
\[
H_f(w_l,0)=:\theta_l,\qquad H_f(w_r,0)=:\theta_r  
\]
and note that since~\eqref{eq:3sol} is cubic we can calculate $\theta_{l,r}$
explicitly. To get a heteroclinic connection we must have $\theta_l=\theta_r$
and by an explicit calculation this yields the condition $\bar{v}=0$. Hence, 
heteroclinic connections of~\eqref{eq:fasy} can occur only if $\bar{v}=0$. 
For $\bar{v}=0$ one easily finds that the relevant equilibria are located at
$w_l=-1$ and $w_r=1$ so that $\theta_l=-1/8=\theta_r$. The double heteroclinic
connection is then explicitly given by the curves $\{z=\pm\frac12(1-w^2)\}$ 
(see also Figure~\ref{fig:FaFl}).
\end{proof}
  
The next step is to check where the relevant equilibria of the layer problem 
are located on the critical manifold $\cC_0^\mu$ for a fixed value of the 
parameter $\mu$ since we have a level set constraint for the full system. Using 
Lemma~\ref{lem:het} one must require $w=\pm1,v=0$ while $z=0$ is the critical
manifold constraint, hence
\[
H(u,0,\pm1,0)=\frac12 u^2-\frac18\stackrel{!}{=}\mu.
\]
Therefore, the transition points where fast jumps from $\cC_{0,l}$ to $\cC_{0,r}$
and from $\cC_{0,r}$ to $\cC_{0,l}$ are possible are given by
\begin{equation}
\label{eq:jpoints}
\cC_0^\mu\cap \{v=0,w=\pm 1\}=\left\{u=\pm \sqrt{2\mu+\frac14},v=0,w=\pm1,z=0\right\}. 
\end{equation}
Observe that fast orbits corresponding to positive values of $u$ connect 
$\mathcal{C}_{0,r}^\mu$ to $\mathcal{C}_{0,l}^\mu$, while the symmetric orbits 
with respect to the $u=0$ plane connect 
$\mathcal{C}_{0,l}^\mu$ to $\mathcal{C}_{0,r}^\mu$.\medskip

Recall that $w = \pm 1$ represent the two phases of the material.
Hence, the heteroclinic orbits of the layer problem can be interpreted as
instantaneous transitions between these phases.


\subsection{Singular Fast-Slow Periodic Orbits}
\label{ssec:po}

The next step is to define singular periodic orbits by combining pieces of orbits of
the reduced and layer problem. Figure~\ref{fig:SiOr} illustrates the situation. The entire singular orbit 
$\gamma_0^\mu$ is obtained connecting two pieces of orbits of the reduced problem with
heteroclinic orbits of the fast subsystem for a 
fixed value of $\mu$, see Figure~\ref{fig:SiOr}(a). The configuration of the two-dimensional
critical manifold and the singular periodic orbit is indicated in Figure~\ref{fig:SiOr}(b).\medskip

      \begin{figure}[!ht]
      \begin{minipage}{.5\textwidth}
      \psfrag{w}{\footnotesize{$w$}}
      \psfrag{z}{\footnotesize{$z$}}
      \psfrag{u}{\footnotesize{$u$}}
      \centering
      \includegraphics[scale=0.5]{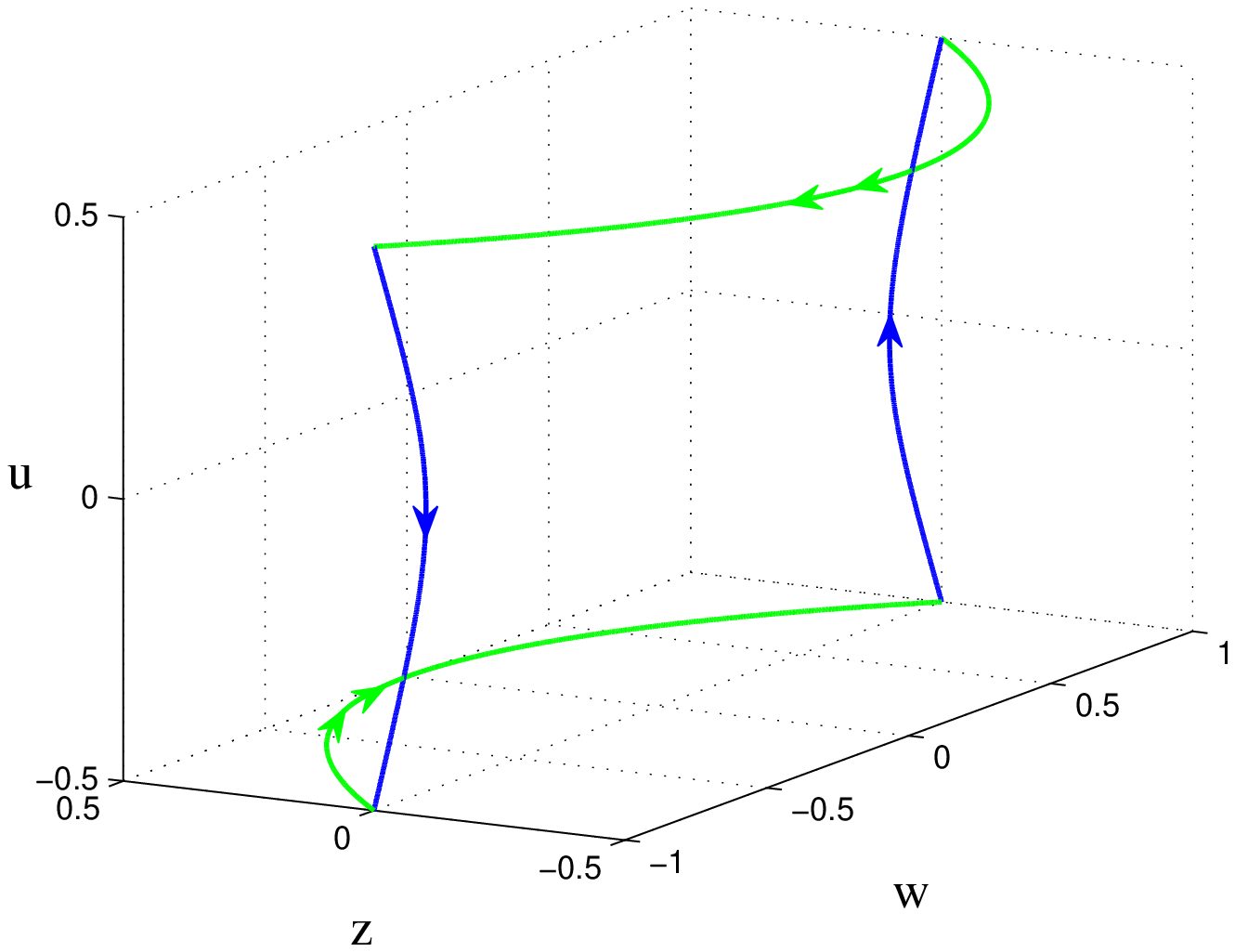}
      \end{minipage}%
      \begin{minipage}{.5\textwidth}
        \psfrag{(b)}{(b)}
        \psfrag{w}{\footnotesize{$w$}}
        \psfrag{u}{\footnotesize{$u$}}
        \psfrag{v}{\footnotesize{$v$}}
	\centering
	\includegraphics[scale=0.31]{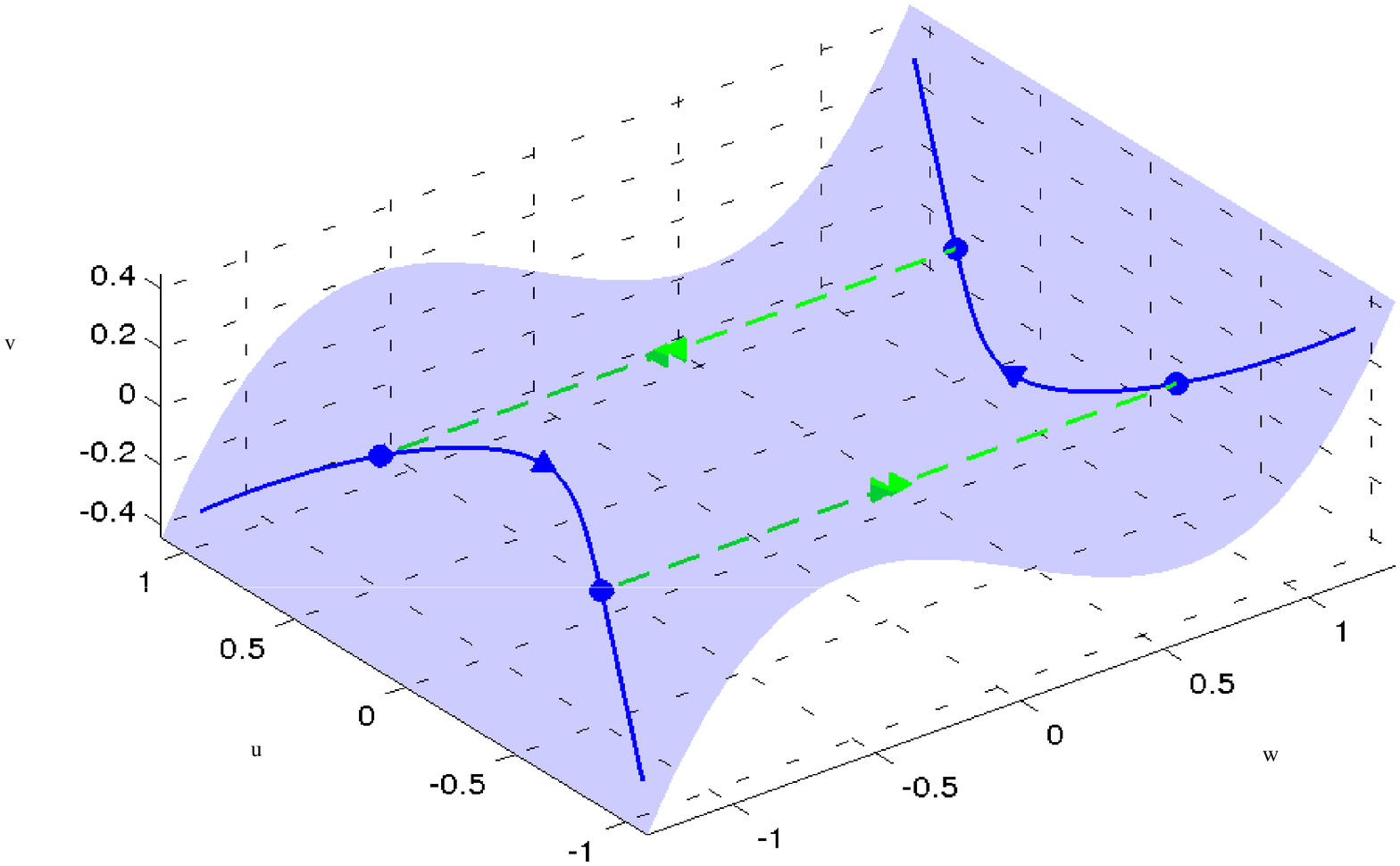}
      \end{minipage}
      \caption{Singular periodic orbit $\gamma_0^\mu$ for a fixed value of $\mu$ ($\mu=0$), 
      obtained by composition of slow (blue) and fast (green) pieces. (a) Orbit in 
      $(w,z,u)$-space. (b) Orbit in the $(w,u,v)$-space. The fast pieces are indicated via dashed lines
      to illustrate the fact we are here considering their projection in $(w,u,v)$, 
      while they actually occur in the $(w,z)$-plane. Consequently, they do not intersect $\cC_{0,m}$.}
      \label{fig:SiOr}      
      \end{figure}

Here we are only interested in singular periodic orbits which have nontrivial slow
\emph{and} fast segments. Therefore, we do need transitions points from the fast 
subsystem to the slow subsystem. This requirement implies, by using the 
result~\eqref{eq:jpoints}, the lower bound $\mu>-\frac18$. A second requirement
we impose is that the slow subsystem orbits lie inside the normally hyperbolic 
parts $C_{0,l}^\mu$ and $C_{0,r}^\mu$. The $u$-coordinate of the slow segment
closest to the lines $\cL_\pm$ is located at $u=0$. Hence, we calculate 
the value of the Hamiltonian under the condition that the slow trajectory is tangent to
$\cL_\pm$ which yields
\begin{equation}
H\left(0,\frac12(w_\pm^3-w_\pm),w_\pm,0\right)=\frac{1}{24}. 
\end{equation}
Combining these considerations with the results from 
Sections~\ref{ssec:reduced}-\ref{ssec:layer} gives the following result on the existence 
of singular periodic orbits ($\varepsilon=0$):

\begin{proposition}
\label{prop:porbit}
For $\varepsilon=0$, the fast-slow system~\eqref{eq:fusyS},\eqref{eq:fusyF} 
has a family of periodic orbits $\{\gamma_0^\mu\}_{\mu}$ consisting 
of precisely two fast and two slow subsystem trajectories with slow parts
lying entirely in $\cC_{0,l}$ and $\cC_{0,r}$ if and only if 
\begin{equation}
\label{murange}
\mu \in I_{\mu}, \hspace{1.5cm} I_{\mu} := 
\left(-\frac{1}{8},\frac{1}{24}\right). 
\end{equation}
\end{proposition}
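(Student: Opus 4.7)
The plan is to prove both implications by combining Lemma~\ref{lem:het} with the Hamiltonian constraint, using $H$ as the single scalar that controls both the location of the jump points and the shape of the slow arcs. Restricting $H=\mu$ to the critical manifold (setting $z=0$ and $v=\tfrac12(w^3-w)$) yields the scalar equation
\begin{equation*}
u^2 \;=\; 2\mu + \tfrac{3}{4}w^4 - \tfrac{1}{2}w^2,
\end{equation*}
which is the main object in what follows: it parametrises the level curves $\cC_{0,l}^\mu$ and $\cC_{0,r}^\mu$, and evaluating it at distinguished values of $w$ produces both endpoints of $I_\mu$.

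For necessity, any candidate singular orbit must use fast pieces that are heteroclinic orbits of the layer problem, which by Lemma~\ref{lem:het} exist only at $\bar v=0$ and force the jump points to be the four points in~\eqref{eq:jpoints}. Reality of these points requires $u^2=2\mu+\tfrac14\ge 0$, i.e.\ $\mu\ge -1/8$; at equality the pairs of jump points collapse to $u=0$ and no nontrivial slow segment remains, so $\mu>-1/8$ is needed. For the upper bound, evaluating the displayed equation at the fold values $w=\pm 1/\sqrt{3}$ gives $u^2=2\mu-1/12$, so for $\mu\ge 1/24$ the curve $\cC_{0,l}^\mu$ (respectively $\cC_{0,r}^\mu$) meets or crosses the fold line $\cL_-$ (respectively $\cL_+$), forcing any slow arc between the two transition points to leave the normally hyperbolic part.

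For sufficiency, given $\mu\in I_\mu$ I would build $\gamma_0^\mu$ explicitly. The level curve $\cC_{0,l}^\mu$ contains a subarc passing through both transition points $(\pm\sqrt{2\mu+1/4},0,-1,0)$; by the upper bound its turning point at $u=0$ satisfies $w^2=(1+\sqrt{1-24\mu})/3>1/3$, so the arc lies strictly inside $\cC_{0,l}$. The slow flow on this arc is monotone in $u$ on either side of the turning point (by the rescaled system~\eqref{slsy} and Figure~\ref{fig:CrManMu}), hence connects the two transition points in finite slow time. An analogous arc lies in $\cC_{0,r}^\mu$. The two fast heteroclinics $\{z=\pm\tfrac12(1-w^2)\}$ from Lemma~\ref{lem:het}, placed at $(u,v)=(\mp\sqrt{2\mu+1/4},0)$, close the loop; since $H$ is a first integral of the layer flow they automatically lie on $\{H=\mu\}$. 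The resulting curve has exactly two fast and two slow segments with slow parts in $\cC_{0,l}$ and $\cC_{0,r}$, as required.

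The main technical obstacle is the sufficiency check that, for every $\mu\in I_\mu$, the slow arc joining two transition points on a single branch does not bend back through a fold. This reduces to the strict inequality $w^2=(1+\sqrt{1-24\mu})/3>1/3$ at the turning point, i.e.\ $\mu<1/24$; the lower bound $\mu>-1/8$ is forced by real existence of the jump points themselves. Together these two explicit one-dimensional computations on the Hamiltonian level set produce both strict endpoints of $I_\mu$.
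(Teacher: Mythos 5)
Your proposal is correct and follows essentially the same route as the paper: the lower endpoint $\mu>-1/8$ comes from real existence (and non-degeneracy) of the jump points in~\eqref{eq:jpoints}, and the upper endpoint $\mu<1/24$ comes from the tangency condition of the slow arc with the fold lines $\cL_\pm$, which the paper computes as $H\bigl(0,\tfrac12(w_\pm^3-w_\pm),w_\pm,0\bigr)=1/24$ and you express equivalently as $u^2=2\mu-1/12$ having no real solution. Your added explicit parametrisation of the turning point $w^2=(1+\sqrt{1-24\mu})/3$ and the monotonicity check of the slow flow are minor elaborations of the sufficiency direction that the paper leaves implicit, not a different method.
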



The persistence of these periodic orbits for $0 < \varepsilon \ll 1$ on each individual surface level
of the Hamiltonian can be proven by using an argument based on the theorem introduced
by Soto-Trevi{\~n}o in~\cite{ST}.
\begin{theorem}
\label{thm:ST}
For every $\mu \in I_\mu$ and for $\varepsilon > 0$ sufficiently small,
there exists a locally unique periodic orbit of the fast-slow system~\eqref{eq:fusyS},\eqref{eq:fusyF} that is
$\mathcal{O}(\varepsilon)$ close to the corresponding singular \mbox{orbit $\gamma_0^\mu$}.
\end{theorem}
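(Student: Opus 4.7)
The plan is to follow the scheme of Soto-Treviño~\cite{ST} for persistence of periodic orbits whose slow segments lie on saddle-type slow manifolds, adapted to the level-set geometry imposed by the Hamiltonian structure. First I would fix $\mu \in I_\mu$ and work inside the invariant three-dimensional level hypersurface $\Sigma_\mu:=\{H=\mu\}$. Since $\mathcal{C}_{0,l}$ and $\mathcal{C}_{0,r}$ are compact (after restriction to a neighbourhood of $\gamma_0^\mu$), normally hyperbolic, and of saddle type by Lemma~\ref{lem:C0}, Fenichel's theorem produces, for $\varepsilon > 0$ small, invariant slow manifolds $\mathcal{C}_{l,\varepsilon}$ and $\mathcal{C}_{r,\varepsilon}$ that are $C^r$-close to them, together with their three-dimensional local stable and unstable manifolds in $\mathbb{R}^4$. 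Because $H$ is a first integral, these Fenichel objects respect the foliation by level sets, so intersecting with $\Sigma_\mu$ yields one-dimensional slow manifolds $\mathcal{C}_{l,\varepsilon}^\mu,\mathcal{C}_{r,\varepsilon}^\mu$ and their two-dimensional local stable/unstable manifolds inside $\Sigma_\mu$.

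Next I would verify that the singular heteroclinic jumps provided by Lemma~\ref{lem:het} correspond to transverse intersections of invariant manifolds inside $\Sigma_\mu$. Dimension counting within the three-dimensional $\Sigma_\mu$ is favourable: two two-dimensional manifolds generically meet in a one-dimensional curve. Transversality of $W^u(\mathcal{C}_{r,0}^\mu)\cap W^s(\mathcal{C}_{l,0}^\mu)$ at the jump point~\eqref{eq:jpoints} with positive $u$-coordinate (and symmetrically at the other jump) can be read off from the layer Hamiltonian $H_f$ of Lemma~\ref{lem:het}: the signed splitting distance along the symplectic complement is governed by $\theta_l(\bar v)-\theta_r(\bar v)$, which vanishes precisely at $\bar v = 0$ and has nonzero derivative in $\bar v$, while moving along $\mathcal{C}_{r,0}^\mu$ changes $v$ monotonically near the jump (a consequence of the slow-flow analysis in Section~\ref{ssec:reduced}). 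Since transversality is an open condition, it persists under the $C^r$ Fenichel perturbation, yielding non-empty one-dimensional heteroclinic connections between $\mathcal{C}_{r,\varepsilon}^\mu$ and $\mathcal{C}_{l,\varepsilon}^\mu$ that vary smoothly in $\varepsilon$.

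Given these connections, I would then apply the Exchange Lemma (see~\cite{GSPT,Kue}) to track the evolution of invariant manifolds during the long passages near the saddle-type slow manifolds: a trajectory entering a Fenichel neighbourhood along the stable foliation and spending time of order $\mathcal{O}(1/\varepsilon)$ exits $C^1$-close to the unstable foliation based at the image under the slow flow. Composing the four resulting transition maps, namely slow passage along $\mathcal{C}_{l,\varepsilon}^\mu$, fast jump, slow passage along $\mathcal{C}_{r,\varepsilon}^\mu$, and the second fast jump, yields a Poincaré return map $P_\varepsilon$ on a cross-section transverse to $\gamma_0^\mu$ inside $\Sigma_\mu$. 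In the singular limit, $P_0$ reduces to a map with a non-degenerate fixed point, since the slow flow on $\mathcal{C}_{0,l}^\mu,\mathcal{C}_{0,r}^\mu$ is regular and bounded away from the fold lines $\mathcal{L}_\pm$ precisely on $I_\mu$. An implicit function theorem argument then produces a locally unique fixed point of $P_\varepsilon$ for all sufficiently small $\varepsilon>0$, which is the sought periodic orbit $\mathcal{O}(\varepsilon)$-close to $\gamma_0^\mu$.

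The principal obstacle is the saddle-type nature of both slow branches, which rules out the contraction-based return arguments used for classical relaxation oscillations where at least one branch is attracting. The Exchange Lemma is the correct replacement, but its hypotheses must be checked with care: the Fenichel fibrations at entry to each slow manifold must align transversely with the incoming heteroclinic manifold, and the exit fibration with the outgoing one. The Hamiltonian structure is an asset rather than a complication here, since it provides the codim-1 foliation by $\Sigma_\mu$ on which the dimension count works, and it forces the singular heteroclinic alignment through energy matching ($\theta_l=\theta_r$ at $\bar v = 0$) rather than through a fragile Melnikov-type genericity condition.
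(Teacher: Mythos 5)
Your strategy matches the paper's: both restrict to the level set $\{H=\mu\}$ and invoke the Soto-Trevi\~no persistence scheme for a singular orbit consisting of two saddle-type slow segments joined by two transversal heteroclinic jumps, with you unpacking the internal machinery (Fenichel theory on the level set, Exchange Lemma, Poincar\'e return map, implicit function theorem) that the paper cites as a black-box theorem from~\cite{ST} after checking its hypotheses via Lemmas~\ref{lem:C0} and~\ref{lem:het} and the transversality of the slow flow to the take-off and touch-down points~\eqref{eq:jpoints}. The one subtlety you do not address, but to which the paper devotes the bulk of its proof, is that the concrete reduction to a $(2,1)$-fast-slow system obtained by solving $H=\mu$ for $v$ (giving system~\eqref{eq:resyS}) is singular at $w=0$, exactly where the fast jumps must pass, so the paper patches in a second chart solving for $u$ near $w=0$; your intrinsic level-set framing legitimately sidesteps this, but the chart issue becomes unavoidable the moment one wants to apply the coordinate-form statement of the theorem in~\cite{ST} rather than rederive the Exchange-Lemma argument intrinsically, so it is worth acknowledging explicitly.
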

\begin{proof}
The Hamiltonian structure of the system suggests to study the individual levels as parametrized
families by directly applying the Hamiltonian function as a first integral
to reduce the dimension of the system. At first sight, a convenient choice is to express $v$ 
as a function of the variables $(u, w, z)$ and $\mu$
\begin{equation}
\label{v}
v=\frac{4u^2-8\mu-2w^2+w^4-4z^2}{8w}.
\end{equation}
Consequently, equations \eqref{eq:fusyS} transform into a $(2,1)$-fast-slow system
\begin{equation}
\label{eq:resyS}
\begin{aligned}
\dot{u} &= w,\\
\varepsilon\dot{w} &= z,\\
\varepsilon\dot{z} &= \frac{1}{2} (w^3-w)-\frac{4u^2-8\mu-2w^2+w^4-4z^2}{8w}.\\
\end{aligned}
\end{equation}
Theorem $1$ in \cite{ST} for a $C^r$ $(r \geq 1)$ $(2,1)$-fast-slow system guarantees the persistence of periodic orbits
consisting of two slow pieces connected by heteroclinic orbits for $0 < \varepsilon \ll 1$ when the following conditions hold:
\begin{itemize}
 \item The critical manifolds are one-dimensional and normally hyperbolic (given by Lemma~\ref{lem:C0}).
 \item The intersection between $W^u(\cC_{0,l})$ (resp. $W^u(\cC_{0,r})$) and $W^s(\cC_{0,r})$ (resp. $W^s(\cC_{0,l})$) is transversal (confirmed by Lemma~\ref{lem:het}, see Fig.~\ref{fig:trint}).
 \item The full system possesses a singular periodic orbit and the slow flow on the critical manifolds is transverse to touch-down and take-off sets,
	which reduce to 0-dimensional objects in this case as we explicitly obtained in \eqref{eq:jpoints}.
\end{itemize}

      \begin{figure}[!ht]
      \psfrag{C0}{\footnotesize{$\mathcal{C}_0$}}
      \psfrag{Wlm}{\tiny{$W^u(\mathcal{C}_0^l)$}}
      \psfrag{Wrm}{\tiny{$W^s(\mathcal{C}_0^r)$}}
      \psfrag{w}{\footnotesize{$w$}}
      \psfrag{z}{\footnotesize{$z$}}
      \psfrag{v}{\footnotesize{$v$}}
      \centering
      \includegraphics[scale=0.35]{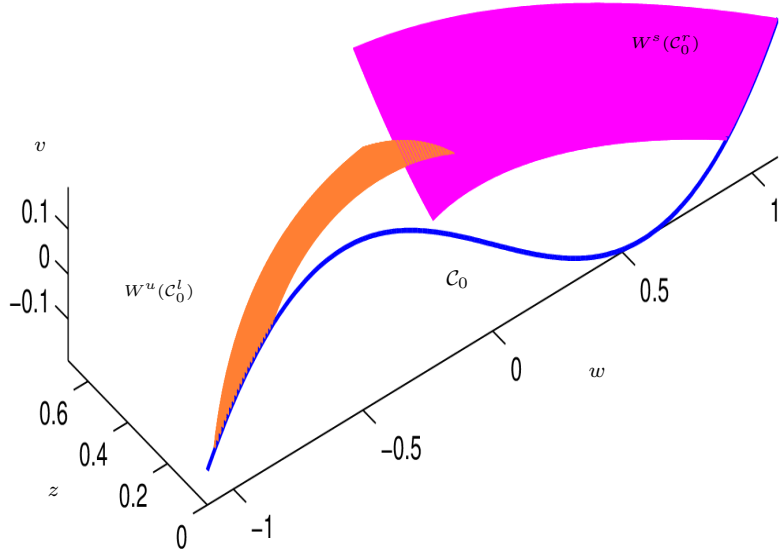}
      \caption{Transversal intersection in the $(w,z,v)$ space between $W_u(\cC_{0,l})$ (in orange) and $W_s(\cC_{0,r})$ (in magenta). The blue line represents the critical manifold $\cC_0$.}
      \label{fig:trint}
      \end{figure}
      
However, system~\eqref{eq:resyS} appears to be nonsmooth at $w=0$ and the fast orbits  necessarily cross $w=0$. To overcome this (apparent) difficulty we use other charts 
for the manifold $H(u,v,w,z) = \mu$ for parts of the singular orbit close to $w=0$. Instead of~\eqref{v} we now express $u$ as a function of the other variables, i.e.
\begin{equation}
\label{u}
u = \pm \frac{1}{2} \sqrt{8 v w + 2 w^2 - w^4 + 4 z^2 + 8 \mu}.
\end{equation}
This leads to the following description of the dynamics:
\begin{itemize}
 \item System~\eqref{eq:resyS} describes the dynamics on the slow pieces away from $w=0$;
 \item The heteroclinic connection corresponding to $u>0$ is expressed by
      \begin{equation}
      \label{eq:resy2S}
      \begin{aligned}
	v' &= + \frac{\varepsilon}{2} \sqrt{8 v w + 2 w^2 - w^4 + 4 z^2 + 8 \mu},\\
	w' &= z,\\
	z' &= \frac{1}{2} (w^3-w)-v.\\
      \end{aligned}
      \end{equation}
 \item The heteroclinic connection corresponding to $u<0$ is expressed by
      \begin{equation}
      \label{eq:resy2S}
      \begin{aligned}
	v' &= - \frac{\varepsilon}{2} \sqrt{8 v w + 2 w^2 - w^4 + 4 z^2 + 8 \mu},\\
	w' &= z,\\
	z' &= \frac{1}{2} (w^3-w)-v.\\
      \end{aligned}
      \end{equation}
\end{itemize}
If we consider system \eqref{eq:fusyS} as a smooth dynamical system on the manifold defined by $H=\mu$, the proof given in \cite{ST} (based on proving the transversal intersection of two manifolds obtained by flowing suitably chosen initial conditions forward and backward in time) goes through without being affected by the fact that we have to work with several coordinate systems, as described above.
\end{proof}


System~\eqref{eq:resyS} has two parameters $\mu,\varepsilon$, which naturally
leads to the question how periodic orbits deform and bifurcate when the two 
parameters are varied. Furthermore, the fast-slow structure with orbits consisting
of two fast jumps and two slow segments as shown in Figure~\ref{fig:SiOr} and the 
three-dimensional form~\eqref{eq:resyS} provide analogies to the travelling wave
frame system obtained from the partial differential equation version of the 
FitzHugh-Nagumo~\cite{FitzHugh,NAG} (FHN) equation. The three-dimensional fast-slow 
FHN system has been studied in great detail using various fast-slow systems 
techniques~(see e.g.~\cite{CarterSandstede,GuckenheimerKuehn1,JonesKopellLanger,
KrupaSandstedeSzmolyan}).
One particular approach to investigate the FHN parameter space efficiently is to 
employ numerical continuation methods~\cite{GuckenheimerKuehn3,Sneydetal}. In fact,
numerical approaches to FHN have frequently provided interesting conjectures and 
thereby paved the way for further analytical studies. Adopting this approach, we are 
going to investigate the problem~\eqref{eq:resyS} considered here using numerical 
continuation to gain better insight into the structure of periodic orbits.

\section{Numerical Continuation}
\label{sec:numerics}

This section is devoted to the numerical investigation of the the critical points 
of the functional $\mathcal{I}^{\varepsilon}$ via the Euler-Lagrange 
formulation~\eqref{eq:resyS}. A powerful tool for such computations 
is~\texttt{AUTO}~\cite{AU}. \texttt{AUTO} is able to numerically
track periodic orbits depending upon parameters using a combination of a boundary
value problem (BVP) solver with a numerical continuation algorithm. Using
such a framework for fast-slow systems often yields a wide variety
of interesting numerical and visualization results; for a few recent examples we refer 
to~\cite{DesrochesKaperKrupa,DesrochesKrauskopfOsinga2,SMST,
GuckenheimerLaMar,GuckenheimerMeerkamp,Tsaneva-AtanasovaOsingaRiessSherman}.\medskip 

The first task one has to deal with is the construction of a starting orbit for fixed 
$\varepsilon \neq 0$. For~\eqref{eq:resyS} this is actually a less trivial task than
for the FHN equation as we are going to explain in Section~\ref{ssec:startorb}. 
In Section~\ref{ssec:startorb}, we are also going to construct a starting periodic 
orbit based upon the geometric insights of Section~\ref{sec:EL}.\medskip

Once the starting periodic orbit is constructed, we use \texttt{AUTO} to perform 
numerical continuation in both parameters $\mu$ and $\varepsilon$. This yields 
bifurcation diagrams and the solutions corresponding to some interesting points 
on the bifurcation branches. Then, the connection between the parameters in the 
minimization process is investigated, in order to numerically determine the 
correspondence that leads to the functional minimum. Finally, a comparison with 
the period law~\eqref{eq:P_mue} predicted by M\"uller~\cite{Mu} is performed. 

\subsection{Construction of the starting orbit}
\label{ssec:startorb}

As indicated already, the construction of a starting periodic orbit is not trivial:

\begin{itemize}
\item{The singular orbit itself, obtained by matching slow and fast subsystem orbits 
for a fixed value of $\mu$ and for $\varepsilon=0$, cannot be used owing to re-scaling 
problems (the fast pieces would all correspond to $x=0$).}
\item{The computation of a full periodic orbit using a direct initial value solver 
approach for \mbox{$0<\varepsilon\ll 1$} is hard to perform since the slow manifolds are 
of saddle type and an orbit computed numerically would diverge from them exponentially 
fast~\cite{SMST}.}
\item{Matching slow segments obtained with a saddle-type algorithm~\cite{SMST,Kristiansen} 
and fast parts computed with an initial value solver may cause problems at the points 
where the four pieces should match.}
\item{In contrast to the FitzHugh-Nagumo case~\cite{GuckenheimerKuehn3,Sneydetal}, the periodic 
orbits we are looking for cannot be detected as Hopf bifurcations from the zero equilibrium. 
In that case, we could use \texttt{AUTO} to locate such bifurcations and then find a periodic 
orbit for $0 < \varepsilon \ll 1$ fixed by branch-switching at the Hopf bifurcation point. In 
our case, however, the origin $p_0$ is a center equilibrium, and an infinite number of periodic 
orbits exist around it in the formulation~\eqref{eq:fusyS}.} 
\item{Starting continuation close to the equilibrium $p_0$ is difficult due to its degenerate nature ($w=0$).
} 
\end{itemize}

Our strategy is to use the geometric insight from Section~\ref{sec:EL} in combination with
a slow manifolds of saddle-type (SMST) algorithm and a homotopy approach. We construct an 
approximate starting periodic orbit using a value of $\mu$ which leads to a short 
``time'' spent on the slow parts of the orbits, so that the saddle-type branches 
do not lead to numerical complications. Then we use an SMST algorithm to find a suitable 
pair of starting points lying extremely close to the left and right parts of the slow manifolds 
$\cC^\mu_{\varepsilon,l}$ and $\cC^\mu_{\varepsilon,r}$. In the last step we employ 
numerical continuation to study the values of $\mu$ we are actually interested in; this 
is the homotopy step.\medskip 

The value $\mu=-\frac18$ is peculiar, since in this case the singular slow segments for~\eqref{eq:resyS} reduce 
to two points
\begin{equation}
\mathcal{C}_{0,l}^{-1/8} = \left\{ (0,-1,0) \right\},\qquad \text{and} \qquad
\mathcal{C}_{0,r}^{-1/8} = \left\{ (0,1,0) \right\},
\end{equation}
i.e., touch-down and take-off sets for the fast dynamics coincide in this case.
The range~\eqref{murange} we are considering does not include $\mu=-\frac18$; 
however, this property makes it an excellent candidate for the first step of our strategy. 
Indeed, we know already from the geometric analysis in Section~\ref{sec:EL} that the time
spent near slow manifolds is expected to be very short in this case.

Although it is still not possible to compute the full orbit using forward/backward integration, 
we can compute two halves, provided we choose the correct initial condition. We aim to find 
a point on the slow manifolds $\cC^\mu_{\varepsilon,l}$ and $\cC^\mu_{\varepsilon,r}$ as an
initial value. The SMST algorithm~\cite{SMST} helps to solve this problem. The 
procedure is based on a BVP method to compute slow manifolds of saddle-type in fast-slow 
systems. Fixing $\varepsilon$ and $\mu$, we select manifolds $B_l$ and $B_r$, which are 
transverse to the stable and unstable eigenspaces of $\mathcal{C}_{0,l}^\mu$ and 
$\mathcal{C}_{0,r}^\mu$, respectively (Figure \ref{fig:SMST}). The plane $B_l$ and the line
$B_r$ provide the boundary conditions for the SMST algorithm.\medskip
 
      \begin{figure}[H]
      \psfrag{Clm}{$\mathcal{C}_l^\mu$}
      \psfrag{Bl}{$B_l$}
      \psfrag{Br}{$B_r$}
      \centering
      \includegraphics[scale=0.65]{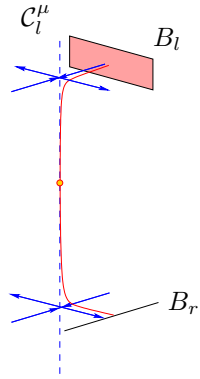}
      \caption{Schematic representation of the SMST algorithm applied to $\mathcal{C}_{0,l}^\mu$ 
      (an analogous situation occurs for $\mathcal{C}_{0,r}^\mu$). The critical manifold is 
      indicated by a dotted blue line, while the red line represents the slow manifold for 
      $\varepsilon=0.001$. The orange point corresponds to $(0,w_L,0)$, which 
      actually belongs to both manifolds.}\label{fig:SMST}
      \end{figure}

Implementing the algorithm for $\mu=-\frac{1}{8}$ and $\varepsilon=0.001$ for~\eqref{eq:resyS}
shows that there are actually two points $(0,w_L,0)$ and $(0,w_R,0)$ which 
are contained in the slow manifold even for $\varepsilon \neq 0$ as well as in the critical 
manifold $\cC_0$. From the geometric analysis in Section~\ref{sec:EL} we know that at 
$\mu=-\frac18$ the take-off and touch-down points coincide and a singular 
double-heteroclinic loop exists for $v=0$. This motivates the choice of $(u,v,w,z)=(0,0,w_L,0)$ 
and $(u,v,w,z)=(0,0,w_R,0)$ in the following algorithm: a numerical integration 
of the full four-dimensional problem~\eqref{eq:fusyS} forward and backward in $x$ is performed, 
imposing the Hamiltonian constraint using a projective algorithm. The computation is stopped 
once the hyperplane $\{w=0\}$ is reached. The full periodic orbit is then constructed by matching 
two symmetric pieces together.\medskip

In principle, there are different ways how one may arrive at a useful construction of a
highly accurate starting periodic orbit. In our context, the geometric analysis guided the
way to identify the simplest numerical procedure, which is an approach that is likely to be
successful for many other non-trivial fast-slow numerical continuation problems.

\subsection{Continuation in $\mu$}
\label{ssec:contmu}

A detailed analysis of the critical points' dependence on the Hamiltonian is performed. 
The value of $\mu$ can be arbitrarily chosen only in the interval $I_{\mu}$, while 
$\varepsilon$ is fixed to $0.001$. Continuation is performed on system~\eqref{eq:resyS} 
using the initial orbit obtained numerically in Section~\ref{ssec:startorb}. Starting at
$\mu=-\frac{1}{8}$, \texttt{AUTO} is able to compute the variation of the orbits up to 
$\mu=\frac{1}{24}$. The bifurcation diagram of the period $P$ with respect to the 
parameter $\mu$ is shown in Figure~\ref{fig:Fig1}(a).\medskip 

      \begin{figure}[!ht]
      \begin{minipage}{.5\textwidth}
        \psfrag{(a)}{(a)}
	\psfrag{mu}{\footnotesize{$\mu$}}
	\psfrag{P}{\footnotesize{$P$}}
	\centering
	\includegraphics[scale=0.55]{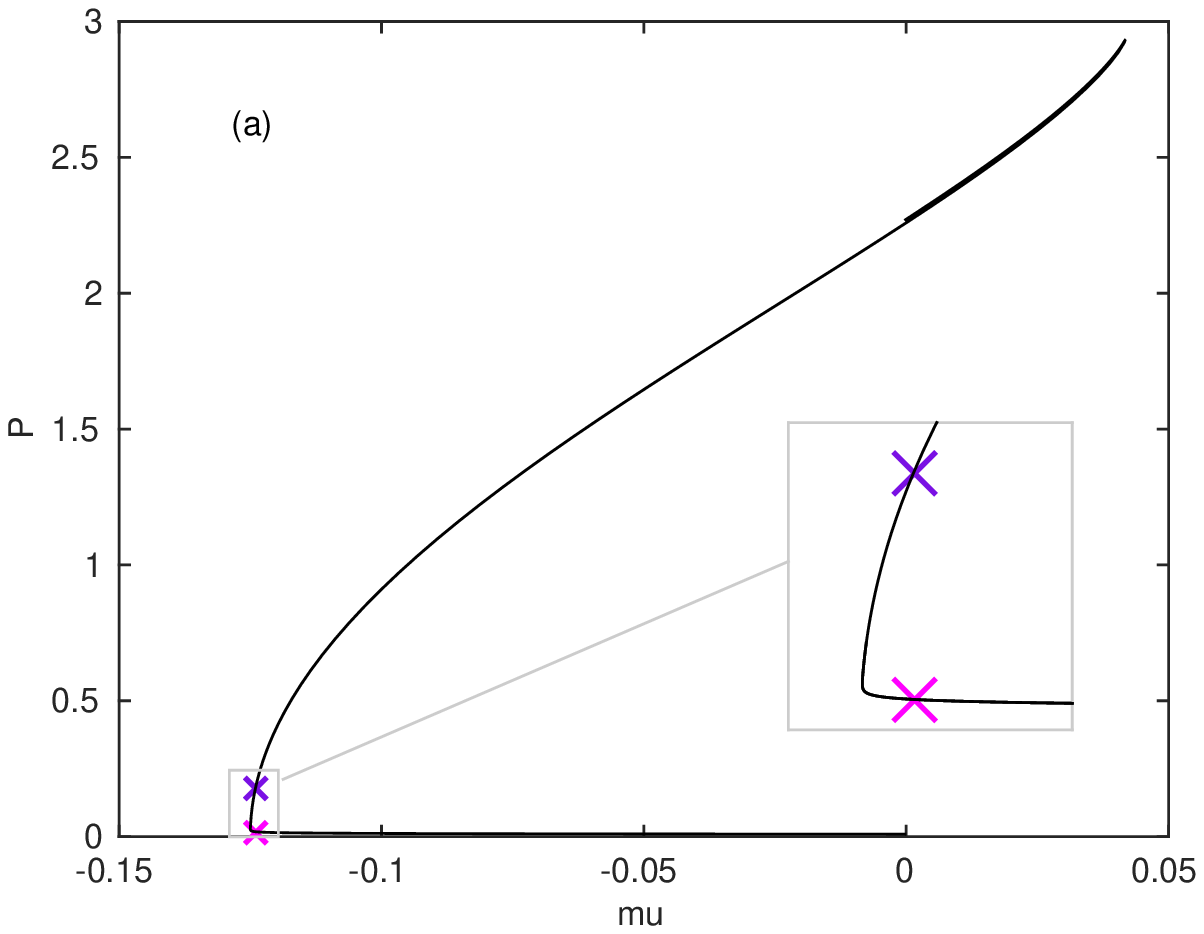}
      \end{minipage}%
      \begin{minipage}{.5\textwidth}
        \psfrag{(b)}{(b)}
        \psfrag{w}{\footnotesize{$w$}}
        \psfrag{u}{\footnotesize{$u$}}
        \psfrag{z}{\footnotesize{$z$}}
	\centering
	\includegraphics[scale=0.55]{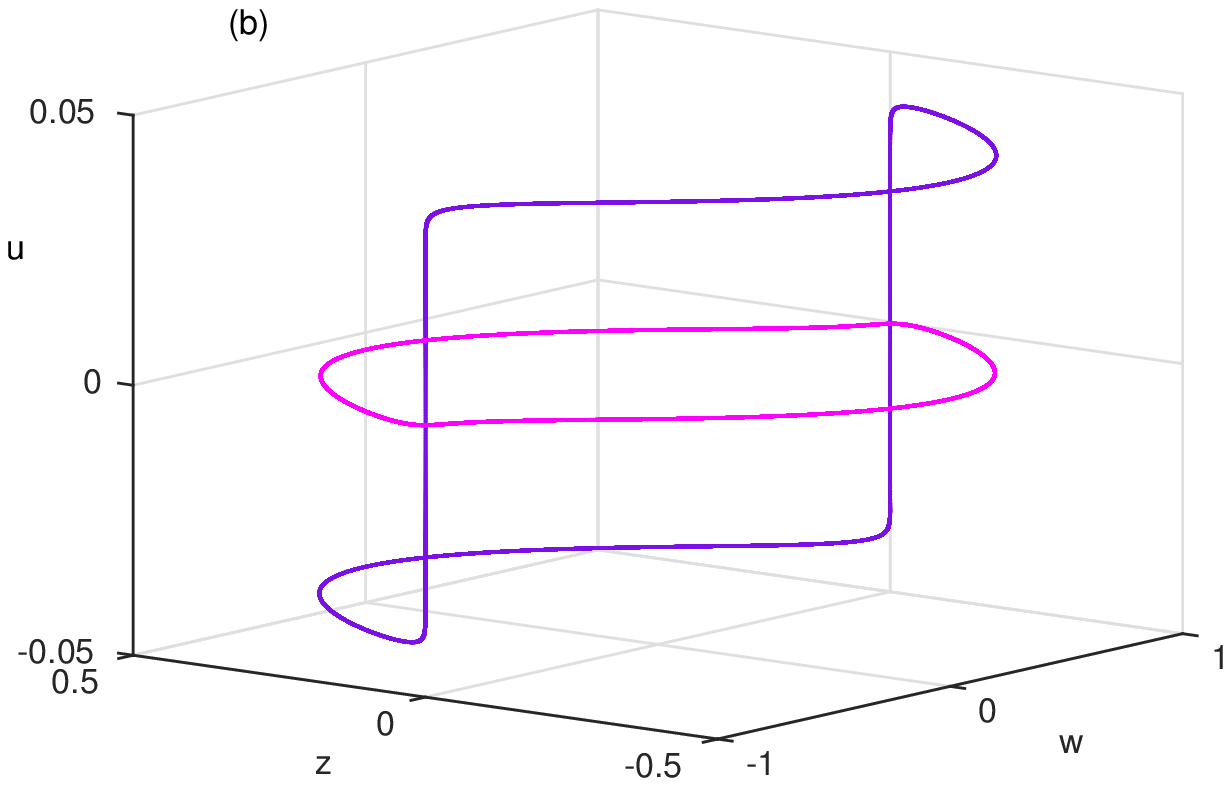}
      \end{minipage}
      \caption{Continuation in $\mu$: (a) bifurcation diagram in $(\mu, P)$-space, 
      where two periodic solutions corresponding to $\mu=-0.124$ are marked by crosses; 
      (b) corresponding solutions in $\left(w,z,u \right)$-space: the one on the lower 
      branch (magenta) is almost purely fast, while the one on the upper branch (purple) 
      contains long non-vanishing slow pieces.}
      \label{fig:Fig1}      
      \end{figure}
      
The first/upper branch of the continuation displays fast-slow orbits corresponding perturbations 
of the singular ones $\left\{ \gamma_0^{\mu} \right\}_{\mu \in I_{\mu}}$ for fixed 
$\varepsilon \neq 0$. As predicted by the geometric analysis we observe that decreasing 
$\mu$ reduces the length of the slow parts, so that the orbits almost correspond to the double
heteroclinic one analytically constructed at $\mu = -\frac{1}{8}$; see Figure~\ref{fig:Fig1}. Near 
$\mu=-\frac{1}{8}$ the bifurcation branch has a fold in $(\mu,P)$-space leading to 
the second/lower bifurcation branch. The difference between the orbits on the two 
branches for a fixed value of $\mu$ is shown in Figure~\ref{fig:Fig1}(b). Along the 
second branch, periodic solutions around the center equilibrium appear, which collapse 
into it with increasing $\mu$ (Figure \ref{fig:Fig2}(b)).\medskip

Furthermore, numerical continuation robustly indicates that the upper branch has another 
fold when continued from $\mu=0$ to higher values of $\mu$ as shown in 
Figure~\ref{fig:Fig2}(a). The orbits obtained by fixing a value of $\mu$ on the upper branch
and its continuation after the fold differ only because of the appearance of two new fast 
parts near the plane $\{u=0\}$ as shown in Figure~\ref{fig:Fig2}(a). We conjecture that 
these parts arise due to the loss of normal hyperbolicity at $\cL_\pm$; see also 
Section~\ref{sec:conclusion}.

      \begin{figure}[H]
      \begin{minipage}{.38\textwidth}
        \psfrag{(a)}{(a)}
	\psfrag{mu}{\footnotesize{$\mu$}}
	\psfrag{P}{\footnotesize{$P$}}
	\centering
	\includegraphics[scale=0.55]{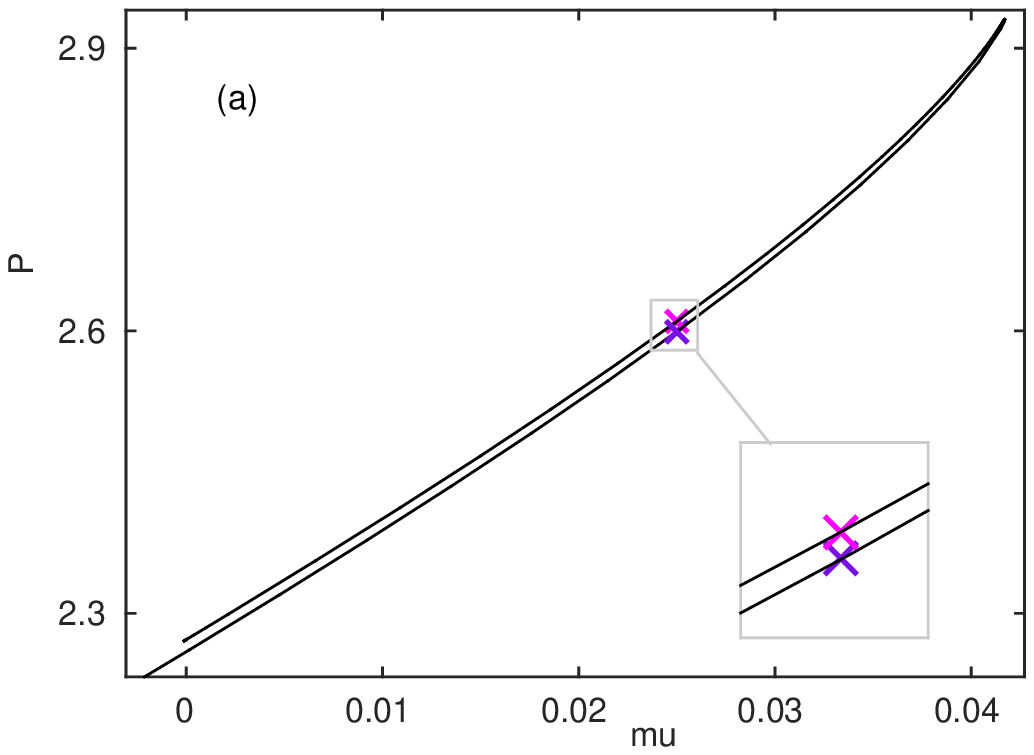}
      \end{minipage}
      \begin{minipage}{.3\textwidth}
        \psfrag{w}{\footnotesize{$w$}}
        \psfrag{u}{\footnotesize{$u$}}
        \psfrag{z}{\footnotesize{$z$}}
        \psfrag{a1}{\footnotesize{(a1)}}
	\centering
	\includegraphics[scale=0.45]{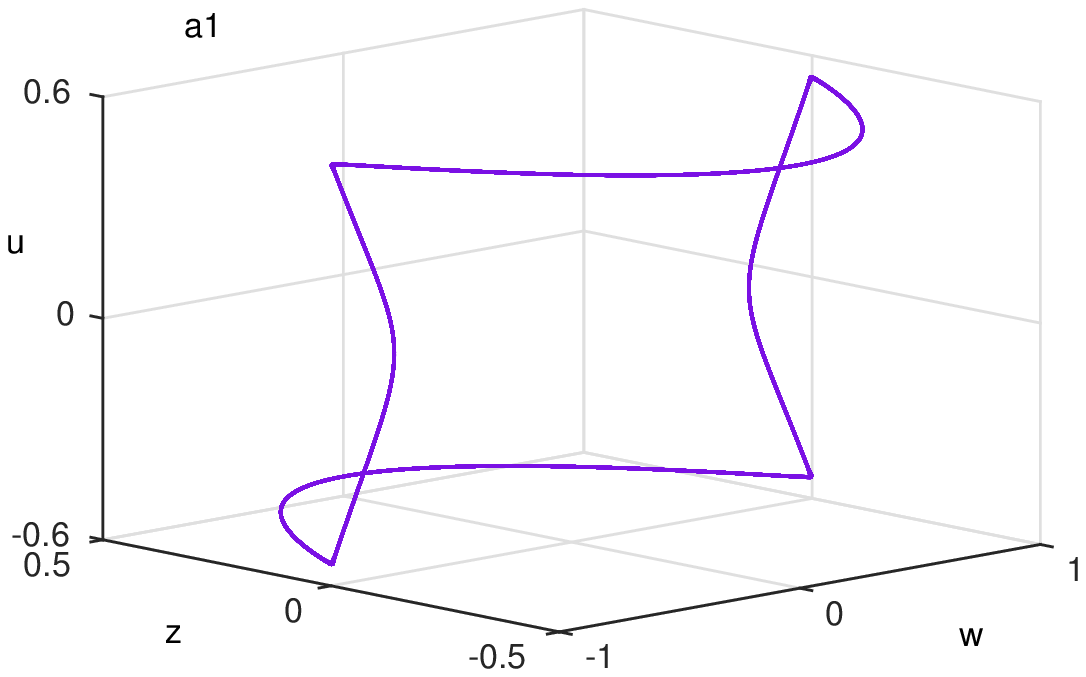}
      \end{minipage}
      \begin{minipage}{.3\textwidth}
        \psfrag{w}{\footnotesize{$w$}}
        \psfrag{u}{\footnotesize{$u$}}
        \psfrag{a2}{\footnotesize{(a2)}}
	\centering
	\includegraphics[scale=0.45]{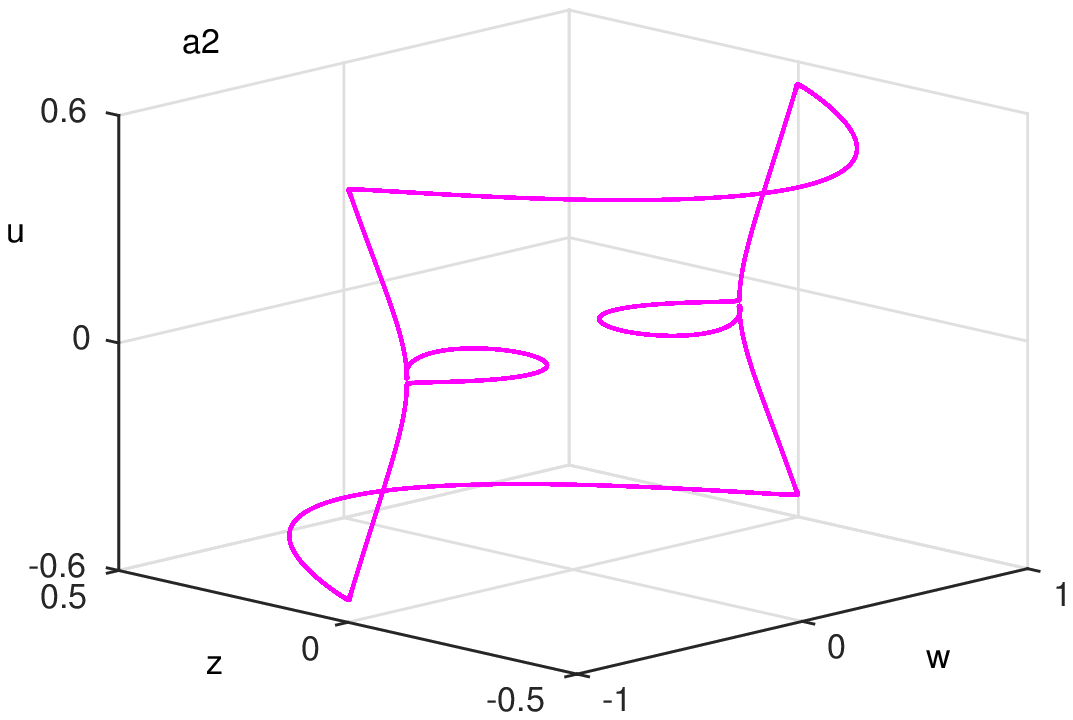}
      \end{minipage}      
      \\
      \hspace{-1cm}
      \begin{minipage}{.5\textwidth}
        \psfrag{(b)}{(b)}
	\psfrag{mu}{\footnotesize{$\mu$}}
	\psfrag{P}{\footnotesize{$P$}}
	\centering
	\includegraphics[scale=0.6]{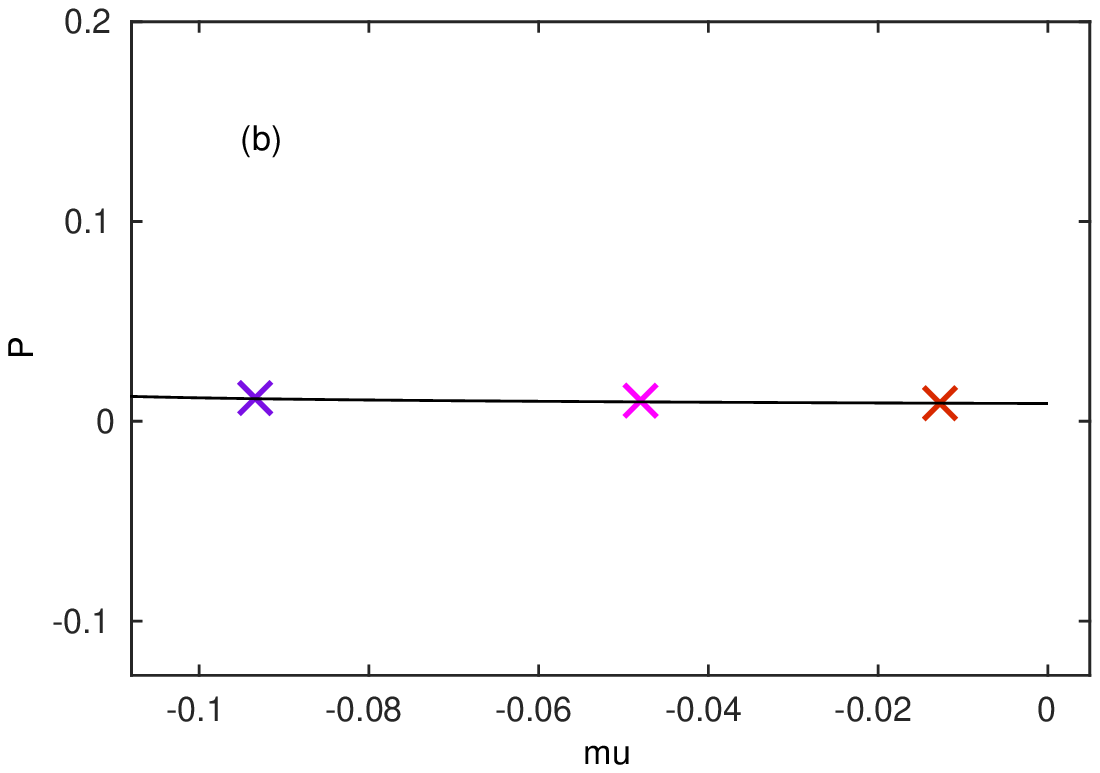}
      \end{minipage}
      \begin{minipage}{.5\textwidth}
        \psfrag{(b)}{(b)}
        \psfrag{w}{\footnotesize{$w$}}
        \psfrag{u}{\footnotesize{$u$}}
        \psfrag{z}{\footnotesize{$z$}}
        \psfrag{b1}{\footnotesize{(b1)}}
        \psfrag{x10}{\scalebox{.5}{$\times 10^{-3}$}}
	\centering
	\includegraphics[scale=0.42]{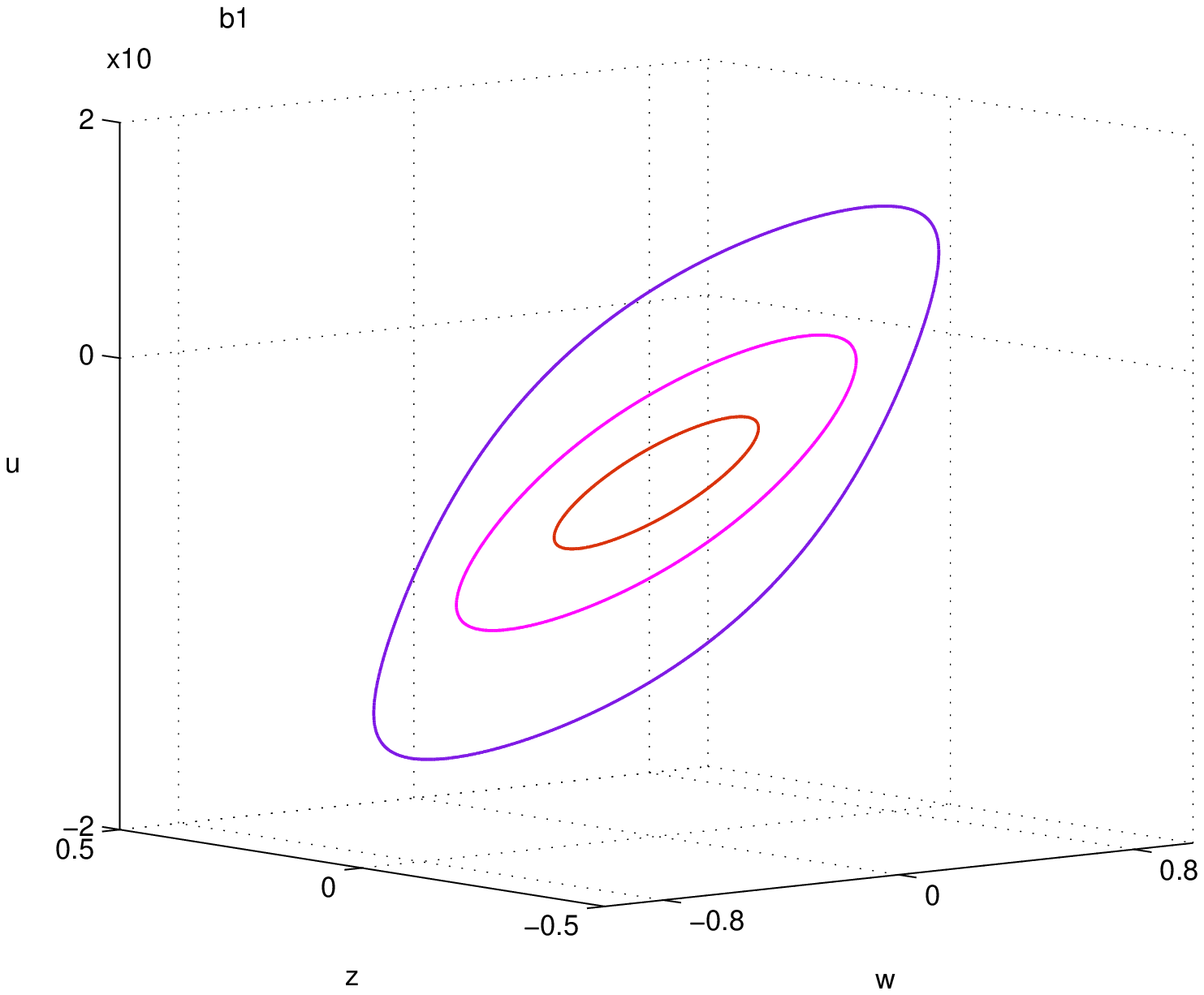}
      \end{minipage}      
      \caption{Continuation in $\mu$. (a) Zoom on the upper part of the 
      bifurcation diagram in-$(\mu,P)$ space, where two periodic orbits 
      corresponding to $\mu=0.0025$ are marked by crosses. (a1)-(a2) The orbits are 
      shown in $\left(w,z,u\right)$-space. 
      The periodic orbit on the bottom part of the upper branch (purple) 
      corresponds to analytical expectations with two fast and two slow segments. 
      The periodic orbit on the top part of the upper branch (magenta) includes 
      two new fast ``homoclinic excursions''. (b) Zoom on the lower part of the 
      bifurcation diagram in $(\mu, P)$-space, where three solutions are marked. 
      (b1) The solutions in phase space all correspond to 
      periodic orbits around the center equilibrium $p_0$; note that the scale 
      in the $u$-coordinate is extremely small so the three periodic orbits almost 
      lie in the hyperplane $\{u=0\}$.}
      \label{fig:Fig2}      
      \end{figure}

\subsection{Continuation in $\varepsilon$}
\label{ssec:conteps}

We perform numerical continuation in $\varepsilon$ by fixing three values of 
$\mu$ in order to capture the behavior of the solutions for the range $I_{\mu}$ 
from Proposition~\ref{prop:porbit}. We consider $\mu_l \approx -\frac{1}{8}$ with
$\mu_l>-\frac18$, $\mu_c \approx 0$, and $\mu_r \approx \frac{1}{24}$ with 
$\mu_r<\frac{1}{24}$; or more precisely $\mu_l = -0.12489619925$, 
$\mu_c = 1.5378905702\cdot 10^{-5}$, and $\mu_r = 0.04100005066$. 
For each of these values, we find two bifurcation branches connected via a fold
in $(\mu,P)$-space; see Figure~\ref{fig:Fig4}.\medskip

The bifurcation diagrams and the associated solutions shown in Figure~\ref{fig:Fig4}
nicely illustrate the dependence of the period on the singular perturbation parameter
$\varepsilon$. When $\varepsilon\rightarrow 0$ there are two very distinct limits for 
the period $P=P(\varepsilon)$ (Figure~\ref{fig:Fig4}(a)-(b)) depending whether we are 
on the upper and lower parts of the main branch of solutions. In the case with 
$\mu_r\approx \frac{1}{24}$ when orbits come close to non-hyperbolic singularities on
$\cC_0$, we actually seem to observe that $P(0)$ 
seems to be independent on whether we consider the upper or lower part of the branch
(see Figure~\ref{fig:Fig4}(c)). Furthermore, functional forms of $P(\varepsilon)$ are 
clearly different for small $\varepsilon$ so the natural conjecture is that there 
is no universal periodic scaling law if we drop the functional minimization constraint.\medskip

The deformation under variation of $\varepsilon$ of the periodic orbits in $(w,z,u)$-space 
is also interesting. For $\mu=\mu_l$ (Figure~\ref{fig:Fig4}(a)), we observe that the upper 
branch corresponds to the solutions that we expect analytically from 
Proposition~\ref{prop:porbit} consisting of two fast and two slow segments when 
approaching $\varepsilon=0$. A similar scenario occurs also 
for the other values of $\mu$ (Figure~\ref{fig:Fig4}(b) and Figure~\ref{fig:Fig4}(c)). 
When the $\varepsilon$ value is too large, or when we are on 
a different part of the branch of solutions, the orbits closed to the equilibrium of 
the full system or additional pieces resembling new fast contributions appear.

      \begin{figure}[!ht]
      \hspace{-1cm}
      \begin{minipage}{.5\textwidth}
        \psfrag{(a)}{(a)}
	\psfrag{eps}{\footnotesize{$\varepsilon$}}
	\psfrag{P}{\footnotesize{$P$}}
	\centering
	\includegraphics[scale=0.5]{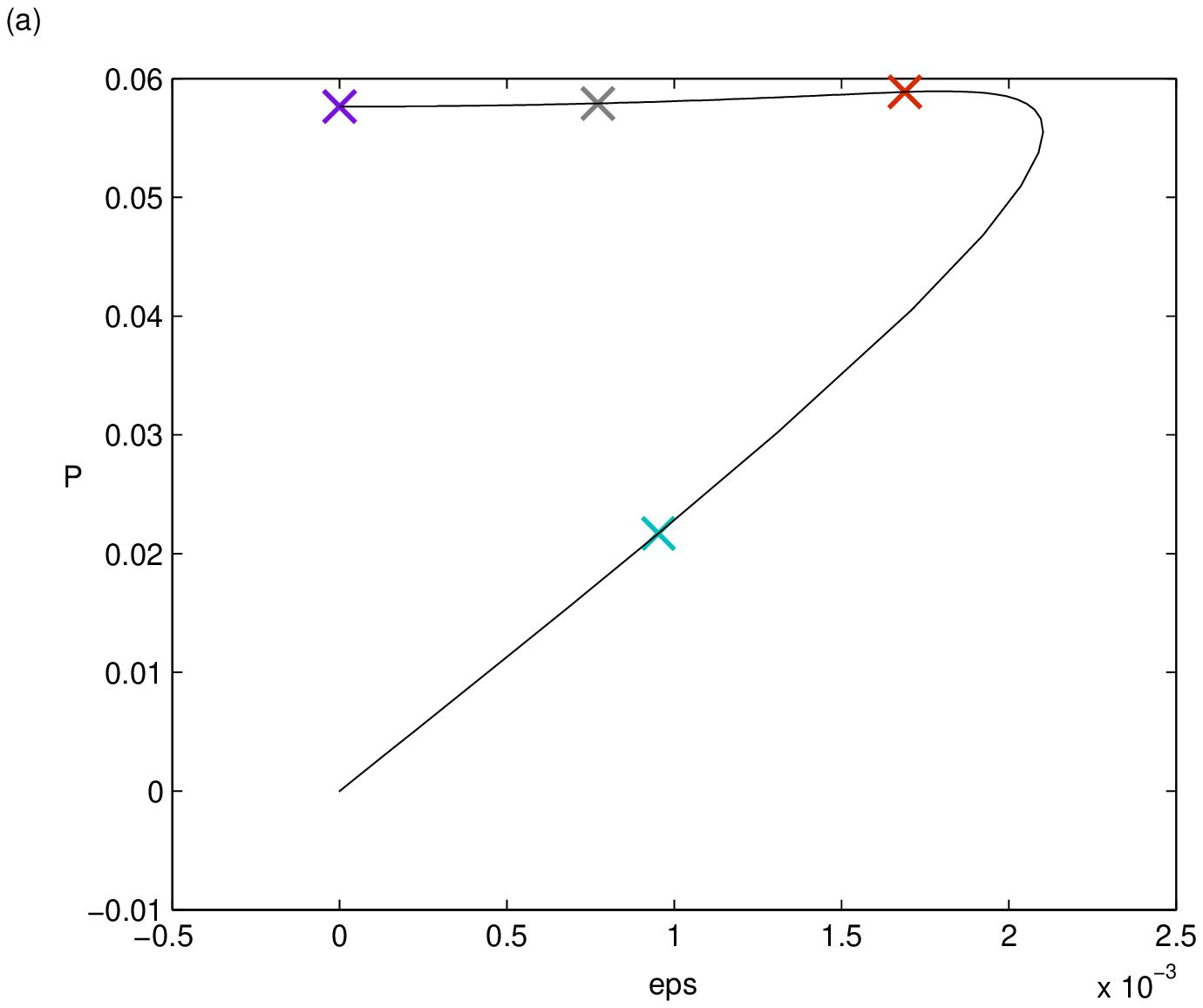}
      \end{minipage}
      \begin{minipage}{.5\textwidth}
        \psfrag{(b)}{(b)}
        \psfrag{w}{\footnotesize{$w$}}
        \psfrag{u}{\footnotesize{$u$}}
        \psfrag{z}{\footnotesize{$z$}}
	\centering
	\includegraphics[scale=0.5]{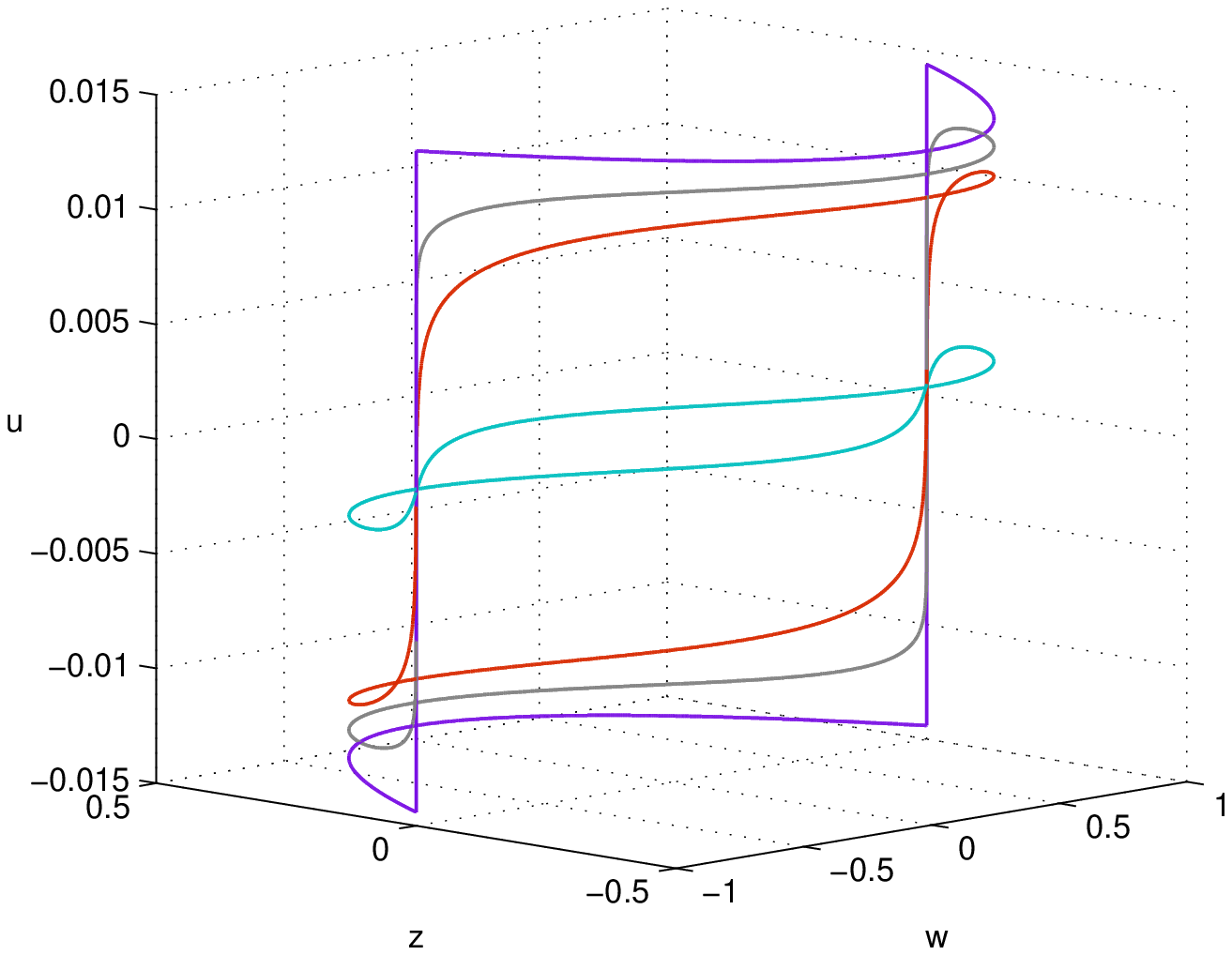}
      \end{minipage}\\
      \hspace{-1cm}
      \begin{minipage}{.5\textwidth}
        \psfrag{(b)}{(b)}
	\psfrag{epsilon}{\footnotesize{$\varepsilon$}}
	\psfrag{P}{\footnotesize{$P$}}
	\centering
	\includegraphics[scale=0.5]{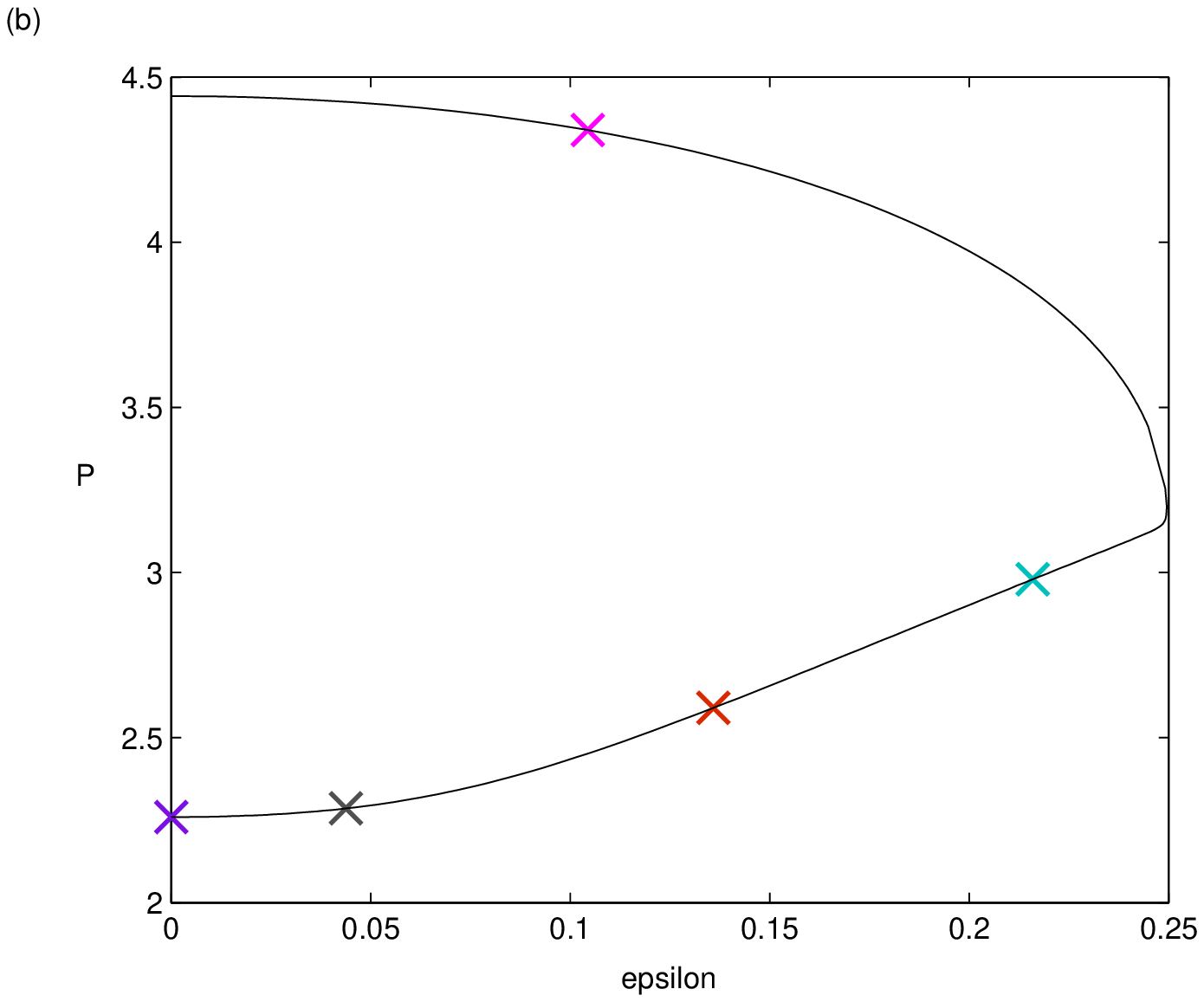}
      \end{minipage}
      \begin{minipage}{.5\textwidth}
        \psfrag{(b)}{(b)}
        \psfrag{w}{\footnotesize{$w$}}
        \psfrag{u}{\footnotesize{$u$}}
        \psfrag{z}{\footnotesize{$z$}}
	\centering
	\includegraphics[scale=0.5]{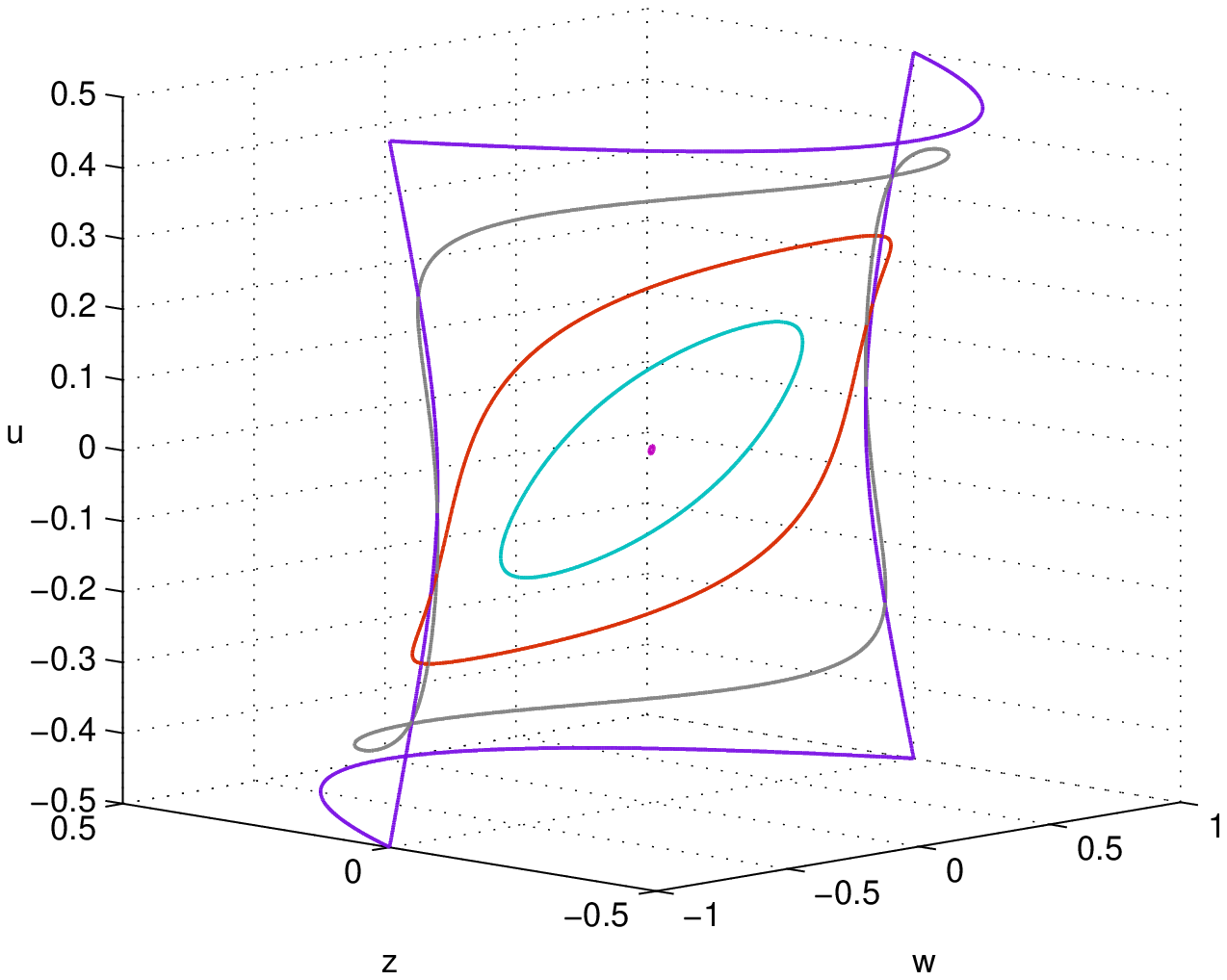}
      \end{minipage}\\
      \hspace{-1cm}
       \begin{minipage}{.5\textwidth}
        \psfrag{(c)}{(c)}
	\psfrag{epsilon}{\footnotesize{$\varepsilon$}}
	\psfrag{P}{\footnotesize{$P$}}
	\centering
	\includegraphics[scale=0.5]{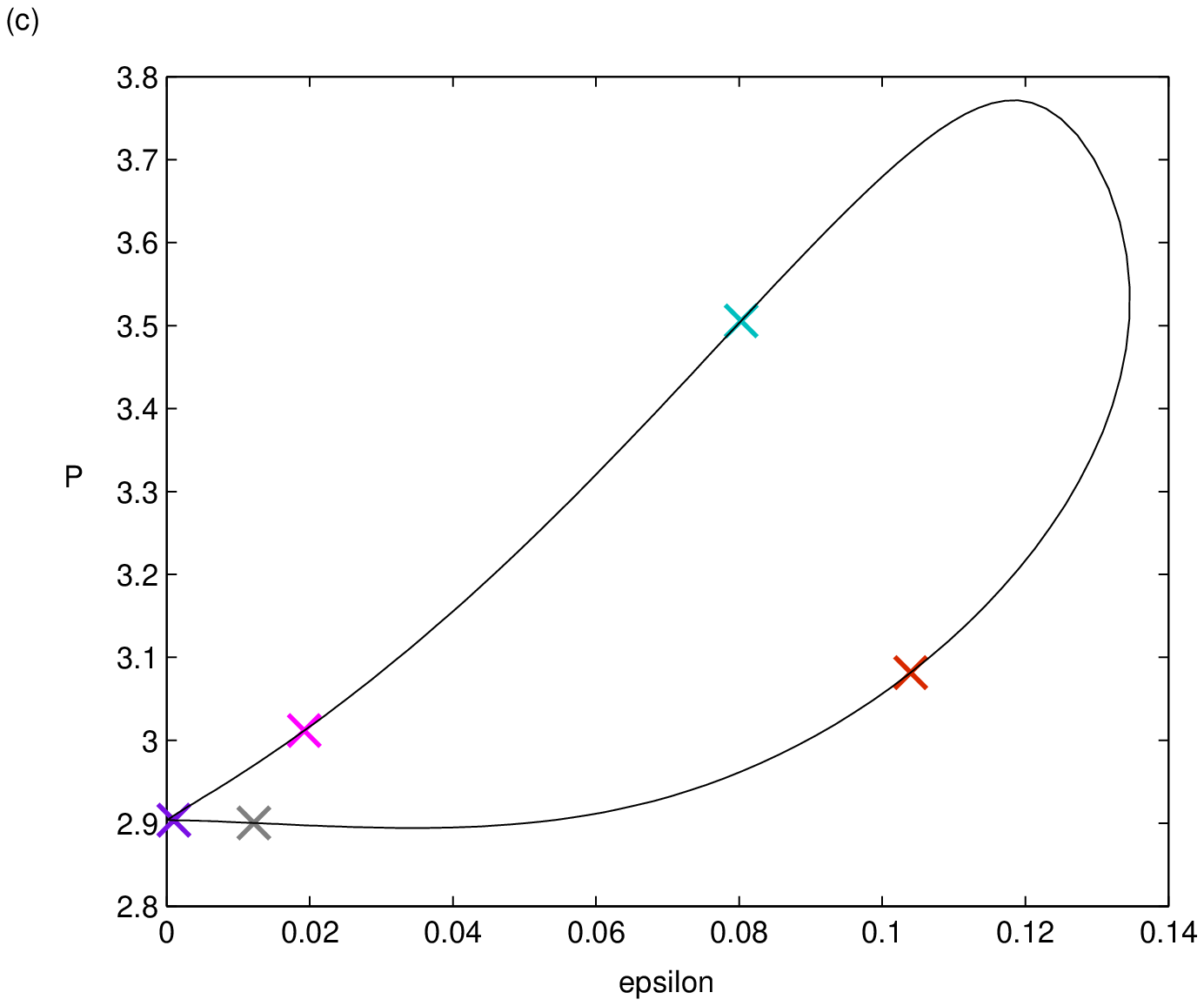}
      \end{minipage}
      \begin{minipage}{.5\textwidth}
        \psfrag{w}{\footnotesize{$w$}}
        \psfrag{u}{\footnotesize{$u$}}
        \psfrag{z}{\footnotesize{$z$}}
	\centering
	\includegraphics[scale=0.5]{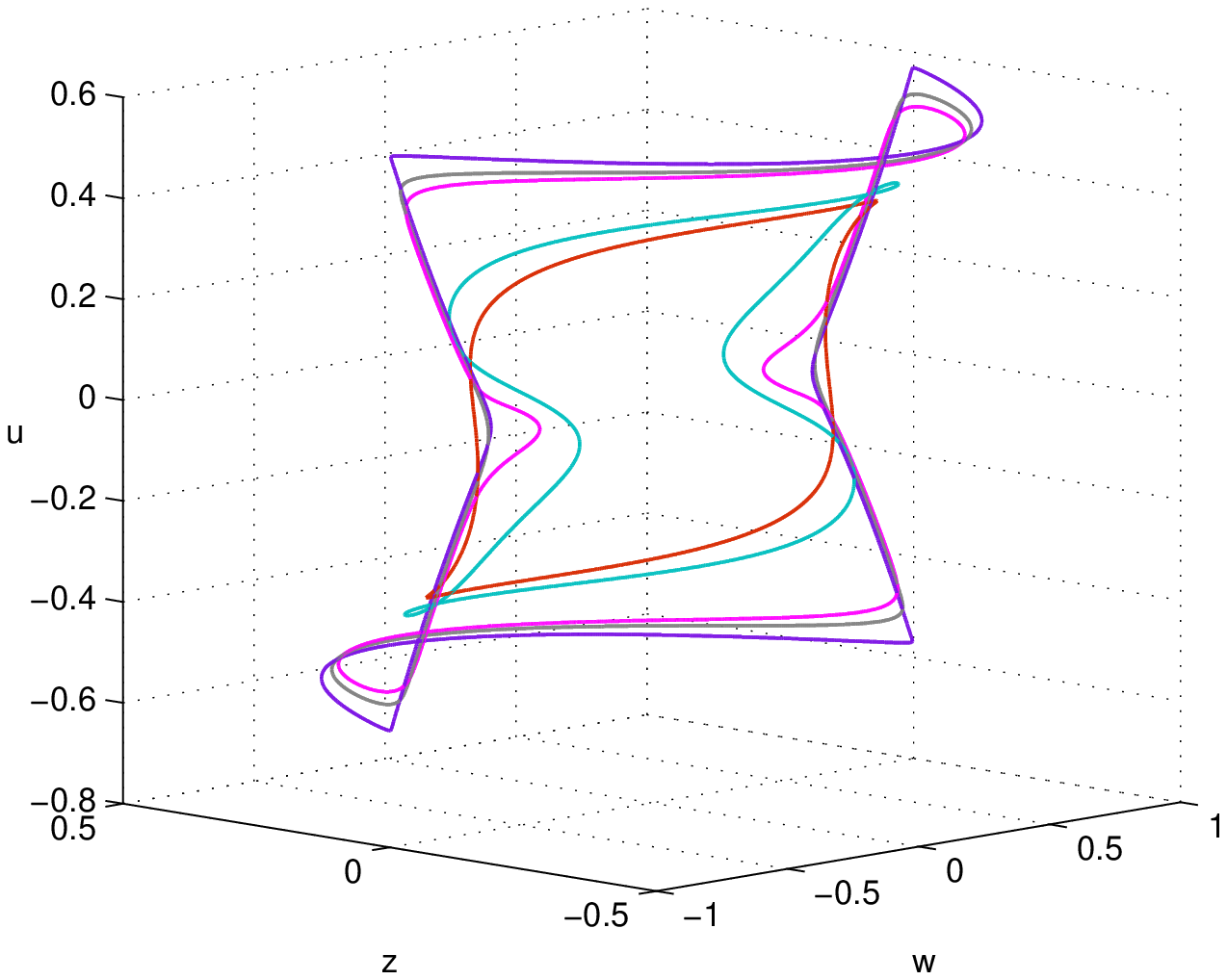}
      \end{minipage}     
      \caption{Continuation in $\varepsilon$: on the left side bifurcation diagrams 
      in $(\varepsilon, P)$ are shown, on the right the corresponding solutions in 
      $\left( w, z, u \right)$-space are displayed. (a) $\mu=\mu_l$, (b) $\mu=\mu_c$, 
      (c) $\mu=\mu_r$.}
      \label{fig:Fig4}      
      \end{figure}
      
\subsection{Period scaling}
\label{ssec:pscaling}

So far, no boundary conditions have been imposed; moreover, all the computed solutions 
are not necessarily minimizers of the functional, but only critical points. Our 
conjecture is that the interaction between the two main parameters of the system 
$\mu$ and $\varepsilon$ should allow us to obtain the true minimizers via a 
double-limit. In other words, for every value of $\varepsilon$ there is a 
corresponding orbit which minimizes the functional $\mathcal{I}^{\varepsilon}$, 
and since along this orbit the Hamiltonian has to constantly assume a certain 
value $\bar{\mu}$, the minimization process should imply a direct connection 
between the parameters. Consequently, it is interesting to investigate this 
ansatz from the numerical viewpoint.\medskip

      \begin{figure}[!ht]
      \psfrag{a}{\footnotesize{(a)}}
      \psfrag{b}{\footnotesize{(b)}}
      \psfrag{mu}{\footnotesize{$\mu$}}
      \psfrag{P}{\footnotesize{$P$}}
      \psfrag{log(eps)}{\footnotesize{$\ln(\varepsilon)$}}
      \psfrag{log(P)}{\footnotesize{$\ln(P)$}}
      \psfrag{e1}{\footnotesize{$\varepsilon=0.1$}}
      \psfrag{e2}{\footnotesize{$\varepsilon=0.01$}}
      \psfrag{e3}{\footnotesize{$\varepsilon=10^{-5}$}}
      \centering
      \includegraphics[scale=0.7]{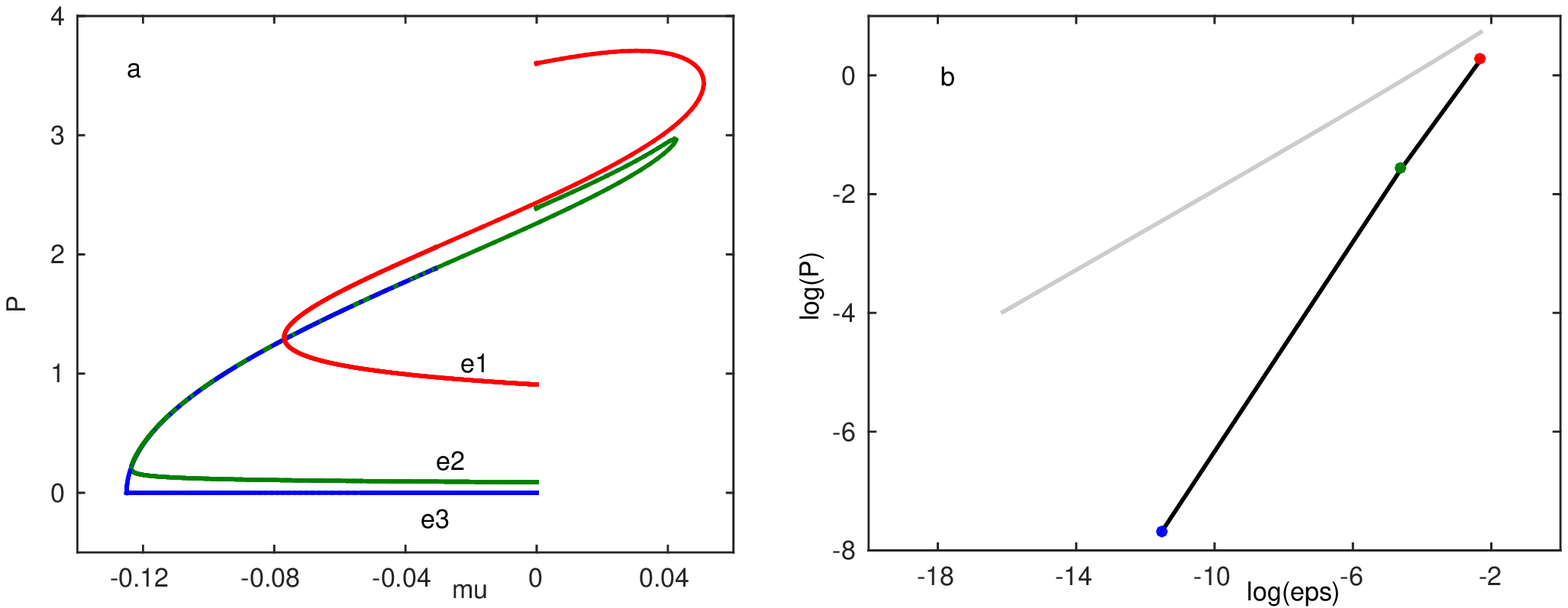}
      \caption{Illustration of two-parameter continuation. (a) Three different bifurcation
      diagrams have been computed, each starting from a solution at $\mu=0$ for three different
      values of $\varepsilon=0.1,0.01,10^{-5}$ (red, green, blue). It is already visible and
      confirmed by the computation that the sequence of leftmost fold points on each branch
      converges to $\mu=-1/8$ as $\varepsilon\rightarrow 0$. However, the period scaling law of 
      the orbits precisely at these fold points, which is shown in (b) as three dots corresponding 
      to the three folds in (a) and a suitable interpolation (black line), does not
      converge as $\cO(\varepsilon^{1/3})$ (grey reference line with slope $\frac13$).}
      \label{fig:Figtwop}
      \end{figure}
      
A first possibility is to establish a connection between the two parameters 
$\varepsilon$ and $\mu$ via a direct continuation in both parameters, starting 
from certain special points, such as the fold points detected in 
Sections~\ref{ssec:contmu}-\ref{ssec:conteps}. However, it turns out that this
process does not lead to the correct scaling law for minimizers of 
$\cI_\varepsilon$ as shown in Figure~\ref{fig:Figtwop}.\medskip

Another option is instead to check if among the critical points of the Euler-Lagrange 
equation~\eqref{eq:EL} we have numerically obtained there are also the minimizers of 
the functional $\mathcal{I}^{\varepsilon}$ respecting the power law~\eqref{eq:P_mue}. 
In~\cite{Mu}, boundary conditions on the interval $\left[ 0,1 \right]$ are also included 
in the variational formulation, and from the results obtained from the continuation in 
$\varepsilon$, one may expect that high values of $\mu$ would not be able to fit them, 
since the period is always too high. Lower values of $\mu$, instead, seem to have sufficiently 
small period. Hence, one could fix one of those (for example, $\mu_l$) and look at what happens 
as $\varepsilon \rightarrow 0$. The hope is that the $\cO(\varepsilon^{1/3})$ leading-order scaling 
for the period naturally emerges. Unfortunately, this does not happen, as we can see in 
Figure~\ref{fig:Fig5}; the lower branch seems to give a linear dependence on $\varepsilon$, 
while the upper branch gives a quadratic one.

      \begin{figure}[!ht]
      \psfrag{log(eps)}{\footnotesize{$ln(\varepsilon)$}}
      \psfrag{log(P)}{\footnotesize{$ln(P)$}}
      \centering
      \includegraphics[scale=0.5]{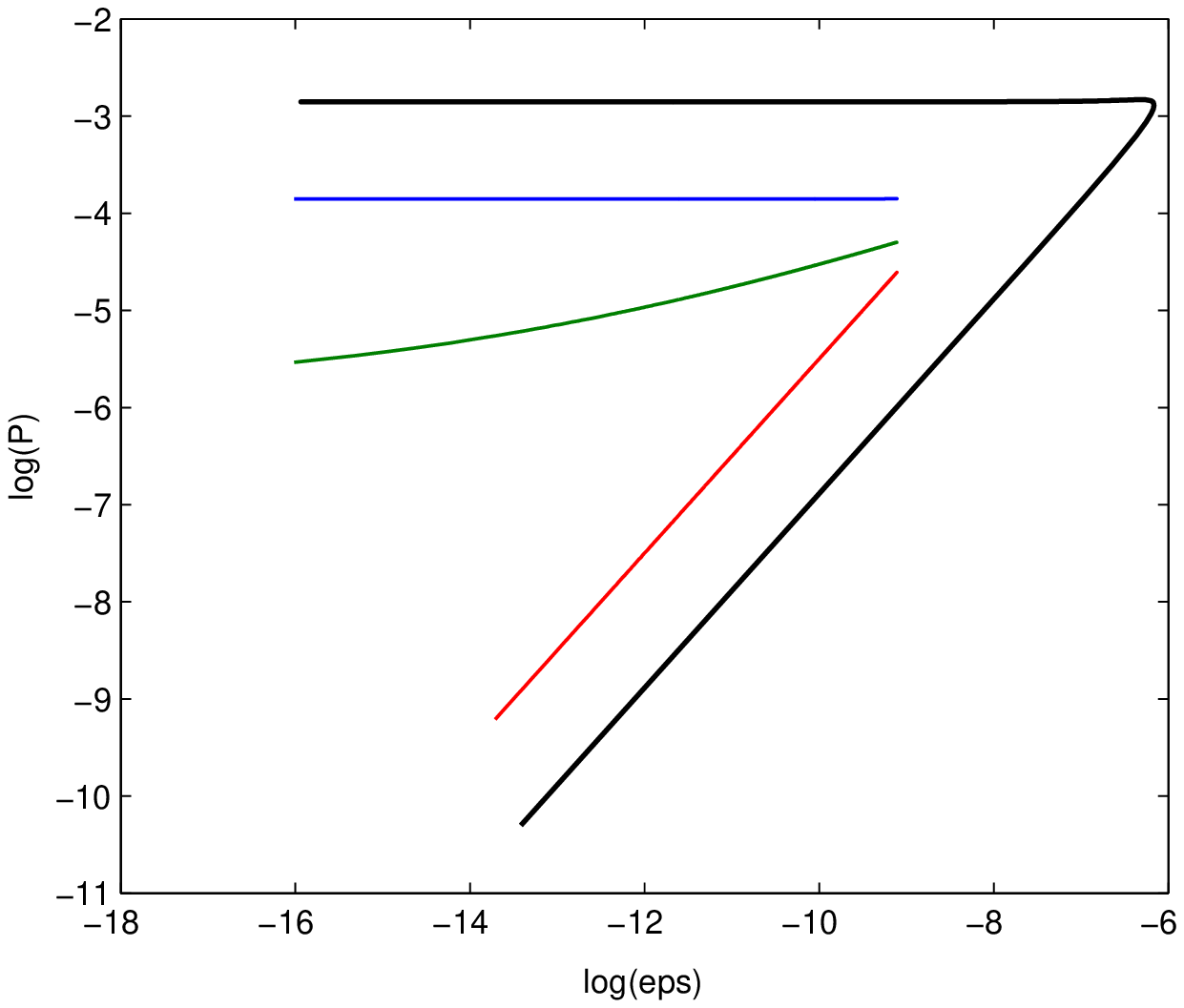}
      \caption{Possible fits of the form $P \simeq \varepsilon^{\alpha}$ for the numerical 
			data computed with $\mu=\mu_l$ (black line): $\alpha=2$, blue; $\alpha = 1/3$, 
			green; $\alpha = 1$, red.}
      \label{fig:Fig5}
      \end{figure}

The reason why from this naive approach the $\cO(\varepsilon^{1/3})$ leading-order scaling does 
not emerge lies in the lack of connection with the minimization process. However, 
Figure~\ref{fig:Fig5} demonstrates that there are several nontrivial scalings of natural 
families of periodic orbits as $\varepsilon\rightarrow 0$.\medskip

So far, we have just assumed that the Hamiltonian value of the minimizers should be ``low'', 
but indeed there is a strict connection between the values of $\varepsilon$ one is considering 
and the value of $\mu$ of the minimizers. In other words, there is not a unique value of 
$\mu$ given by the minimizers for every $\varepsilon$ small but minimizers move over different
Hamiltonian energy levels as $\varepsilon\rightarrow 0$. Starting from this consideration, 
another option, which turns out to be the correct one to recover the scaling~\eqref{eq:P_mue}, 
is to use the periodic orbits from numerical continuation to compute the numerical value of 
the functional $\mathcal{I}^{\varepsilon}$ as a function of the period $P$ fixing different 
values of $\varepsilon$ in a suitable range, such as:
\begin{equation}
\label{eq:eps_range}
 I_{\varepsilon}= \left[ 10^{-7}, 10^{-1} \right].
\end{equation}
			
Then, we obtain different parabola-shaped diagrams where we can extract the value of the period 
minimizing the functional (Figure~\ref{fig:Fig7}). When plotting these values related to the 
value of $\varepsilon$ for which they have been computed, one obtains the results shown in 
Figure~\ref{fig:Fig6}. The values numerically extracted from our solutions match the analytical 
results on the period proven by M\"uller~\eqref{eq:P_mue} when the value of $\varepsilon$ is 
sufficiently small. As $\varepsilon$ increases, the period law is less accurate, as one 
would expect. 

      \begin{figure}[H]
      \psfrag{I}{\footnotesize{$\mathcal{I}^{\bar{\varepsilon}}$}}
      \psfrag{P}{\footnotesize{$P$}}
      \centering
      \includegraphics[scale=0.5]{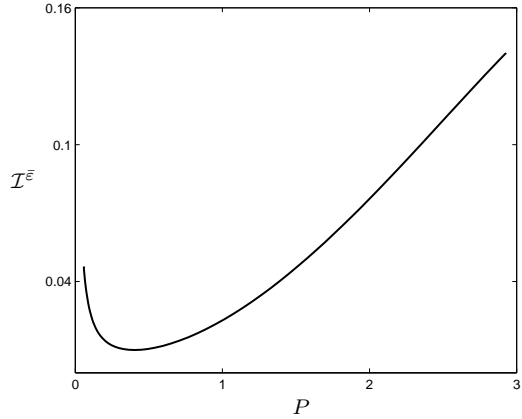}
      \caption{Parabola-shaped diagram obtained by fixing $\varepsilon=0.001$ and 
			numerically computing the value of the functional $\mathcal{I}^{\varepsilon}$ 
			along the solutions computed via continuation in \texttt{AUTO}. The plot presents 
			a minimum, and the value of $P$ corresponding to $\varepsilon$ where this 
			minimum is realized is recorded in order to check the period law~\eqref{eq:P_mue}.}
      \label{fig:Fig7}
      \end{figure}

      \begin{figure}[H]
      \begin{minipage}{.5\textwidth}
        \psfrag{(a)}{(a)}
	\psfrag{eps}{\footnotesize{$\varepsilon$}}
	\psfrag{P}{\footnotesize{$P_{min}$}}
	\centering
	\includegraphics[scale=0.55]{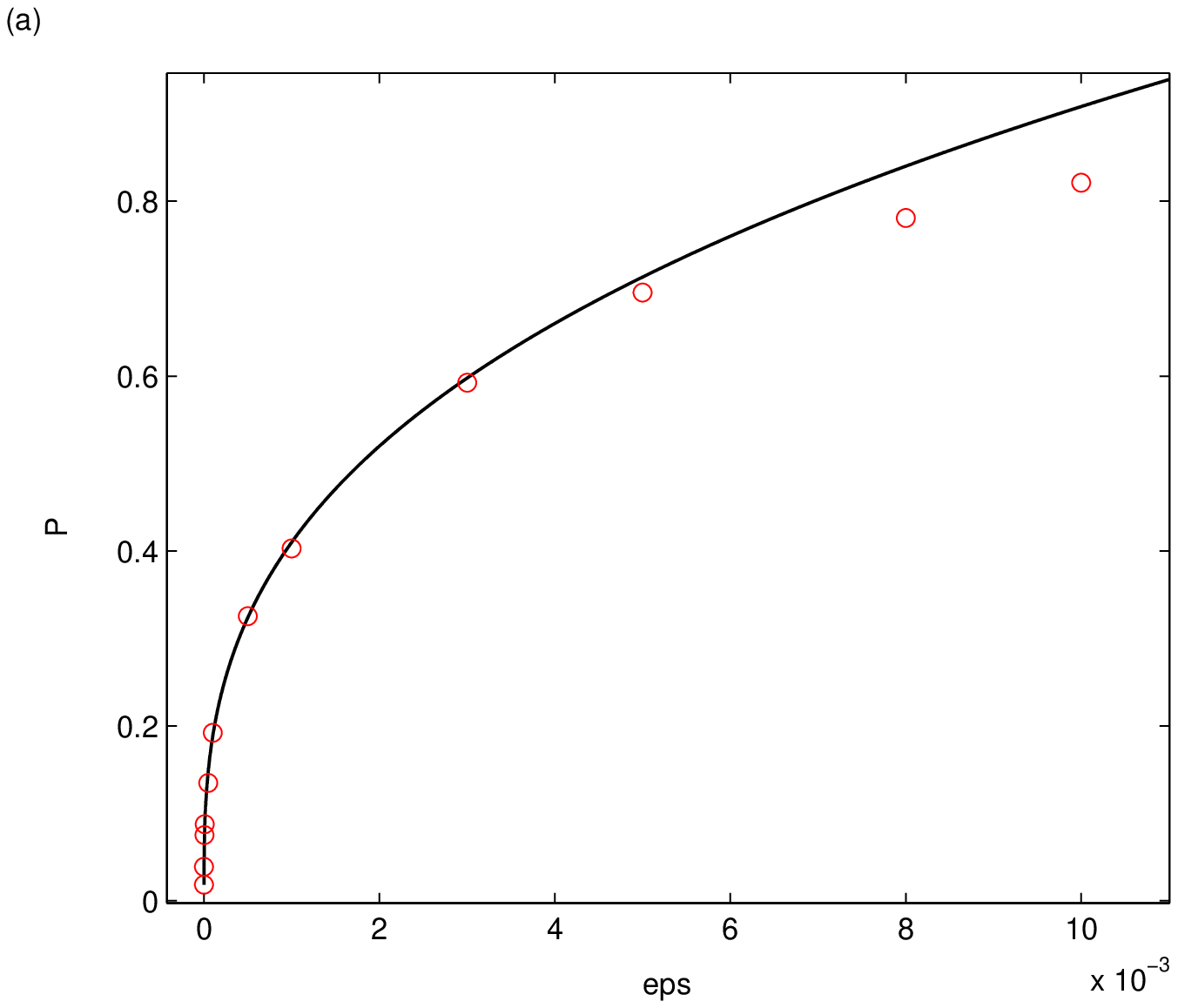}
      \end{minipage}%
      \begin{minipage}{.5\textwidth}
        \psfrag{(b)}{(b)}
	\psfrag{log(eps)}{\footnotesize{$ln(\varepsilon)$}}
	\psfrag{log(P)}{\footnotesize{$ln(P_{min})$}}
	\centering
	\includegraphics[scale=0.55]{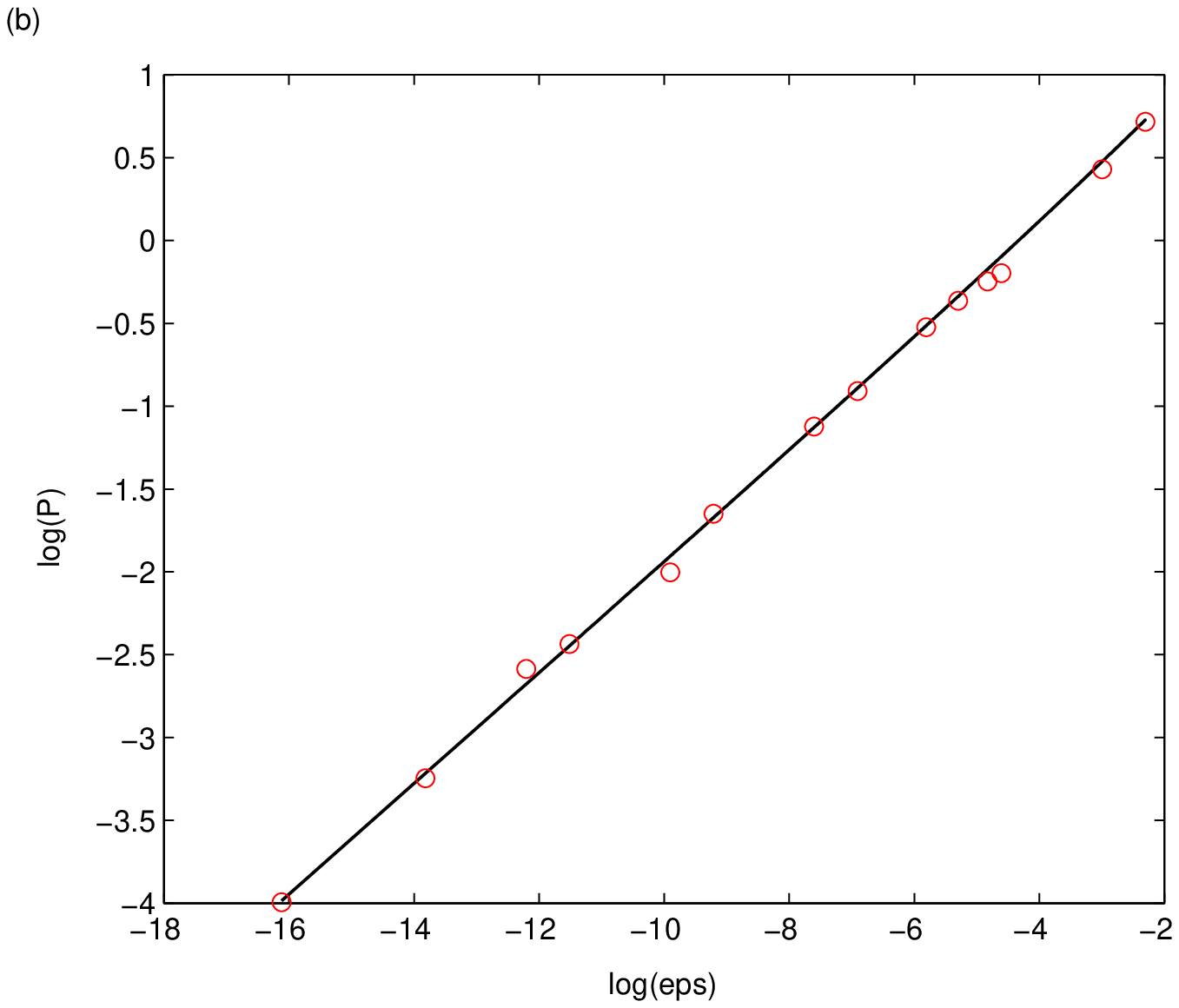}
      \end{minipage}
      \caption{Comparison between the values of $P$ minimizing $\mathcal{I}^{\varepsilon}$ 
			for several discrete values in the range $I_{\varepsilon}$ (red circles) and the period 
			law~\eqref{eq:P_mue} (black line). (a) Zoom on the range $\left[ 10^{-7}, 10^{-2} \right]$, 
			where it is expected that large values of $\varepsilon$ tend to deviate from the 
			$\cO(\varepsilon^{1/3})$ leading-order scaling, while for low values the scaling the scaling
			agrees. (b) The same plot as in (a) on a log-log scale.}
      \label{fig:Fig6}      
      \end{figure}

\section{Conclusion \& Outlook}
\label{sec:conclusion}

In summary, we have shown that geometric singular perturbation theory and numerical continuation 
methods can be very effective tools to understand nonconvex multiscale variational
problems via the Euler-Lagrange formulation. We have proven the existence of a class of singular 
periodic orbits based upon a fast-slow decomposition approach and we have shown that these orbits persist
for $\varepsilon$ small.

The geometric insight was used
to determine a starting solution for numerical continuation in the context of a reduced 
three-dimensional fast-slow system. Then we studied the dependence of periodic orbits on the 
singular perturbation parameter as well as the Hamiltonian energy level set parameter arising
in the reduction from a four- to a three-dimensional system. The parameter space is structured
by several fold points. Furthermore, we were able to study the shape of non-minimizing periodic
orbits for very broad classes of parameters. Finally, we showed that several natural scaling 
laws for non-minimizing sequences of periodic solutions exist and also confirmed numerically
the leading-order scaling predicted by M\"uller for minimizing sequences.\medskip

Based upon this work, there are several open problems as well as generalizations one might
consider. In particular, it would be desirable to extend the persistence result 
to the general class of singularly-perturbed Hamiltonian fast-slow systems~\eqref{spHs}; this
is the subject of ongoing work.\medskip

Another important observation of our numerical study are the intricate orbits that seem to
arise when parts of the slow segments start to interact with the singularities $\cL_\pm$ where
the critical manifold is not normally hyperbolic. The natural conjecture is that the 
additional small fast loops that we observe numerically could correspond to homoclinic ``excursions''
in the fast subsystem anchored at points close to $\cL_\pm$. The blow-up method~\cite{DumortierRoussarie} is 
likely to provide an excellent tool to resolve the non-normally hyperbolic singularities; see 
{e.g.}~\cite{GucwaSzmolyan2,KuehnSzmolyan} where the existence of complicated fast-slow 
periodic orbits involving loss of normal hyperbolicity is proven.\medskip 

The construction of an initial orbit has been one of the hardest problems to tackle. It was solved 
using analytical and numerical tools, after discarding several other plausible approaches. 
The SMST algorithm \cite{SMST} has been a helpful tool to determine good starting points on the slow 
manifolds and then use an initial value solver to obtain segments of a complete whole orbit. Although
our approach works well in practical computations, there are interesting deep numerical analysis
questions still to be answered regarding the interplay between certain classes of fast-slow ``initial
guess'' starting orbits and the success or failure of Newton-type methods for the associated BVPs. In
particular, can one prove certain geometric conditions or restrictions on $\varepsilon$ to guarantee
the convergence for the first solution?\medskip

Another highly relevant direction would be to extend our approach to more general classes of 
functionals. There are many different singularly-perturbed variational problems, arising 
e.g.~in materials science, to which one may apply the techniques presented here. In this context,
it is important to emphasize that we expect that particularly other non-convex functionals could 
be excellent candidates for future work.\medskip

From the viewpoint of applications, it would be interesting to study the practical relevance of 
non-minimizing sequences of periodic solutions. Although we expect the long-term behavior to be 
governed by minimizers, it is evident that non-minimizing periodic orbits can have a high impact 
on time-dependent dynamics, {e.g.}, either via transient behavior, via noise-induced phase 
transitions, or as dynamical boundaries between different regimes.\medskip

\textbf{Acknowledgements:} AI and PS would like to thank the Fonds zur F\"orderung der 
wissenschaftlichen Forschung (FWF) for support via a doctoral school (project W1245). 
CK~would like to thank the VolkswagenStiftung for
support via a Lichtenberg professorship. CK and PS also acknowledge partial support 
of the European Commission (EC/REA) via a Marie-Curie International Reintegration Grant (MC-IRG). 

\bibliographystyle{plain}
{\footnotesize
\bibliography{references}}

\end{document}